\documentclass[]{article}
\setlength{\textwidth}{125mm}
\setlength{\textheight}{195mm}
\setlength{\oddsidemargin}{6mm}
\setlength{\evensidemargin}{28mm}
\setlength{\topmargin}{-5mm}
\setlength{\parskip}{\baselineskip}
\setlength{\parindent}{0cm}
\usepackage{amsmath,amsfonts,amssymb}
\usepackage{amsthm}
\usepackage{mathtools}
\usepackage{cases}
\usepackage[mathfrak]{}
\usepackage[latin1]{inputenc}
\usepackage[T1]{fontenc}
\usepackage{verbatim}
\usepackage{graphicx}
\usepackage{float}
\usepackage{mathrsfs}
\usepackage [all]{xy}
\usepackage{color}
\usepackage{tikz}
\usepackage{comment}
\newtheorem{thm}{Theorem}[section]
 \newtheorem{cor}[thm]{Corollary}
 \newtheorem{lem}[thm]{Lemma}
 \newtheorem{prop}[thm]{Proposition}
 \theoremstyle{definition}
 \newtheorem{defn}[thm]{Definition}
 \theoremstyle{remark}
 \newtheorem{rem}[thm]{Remark}
 \newtheorem{ex}{Example}
 \numberwithin{equation}{section}

\usepackage{authblk}
\title{The Chow ring of a sequence of point and rational curve blow-ups}
\author[1]{Daniel Camaz\'on-Portela \footnote{The author was partially supported by grant PID2022-138906NB-C21 funded by MICIU/AEI/ 10.13039/501100011033 and by ERDF A way of making Europe.}}
%\author[2]{Antonio Campillo}
%\author[2]{Santiago Encinas}
%\author[1]{Wanderer}
%\author[1]{Static}
\affil[1]{Department of Mathematics, University of Almer\'ia, Carretera Sacramento, SN, Almer\'ia, 04120, Spain}
%\affil[2]{Department of Algebra, Analysis, Geometry and Topology, University of Valladolid, Paseo Bel\'en 7, 47011, Spain}
\date{}                     %% if you don't need date to appear
\setcounter{Maxaffil}{0}

\begin{document}
  \maketitle

%\title{Sequences of point blow-ups over perfect fields from a combinatorial point of view}
%\author{Daniel Camaz\'on, Santiago Encinas}
%\date{}

%\begin{document}
%\maketitle

\begin{abstract}
Given a sequence of point and rational curve blow-ups blow-ups of smooth $3-$dimensional projective varieties $Z_{i}$ defined over an algebraically closed field $\mathit{k}$, $Z_{s}\xrightarrow{\pi_{s}} Z_{s-1}\xrightarrow{\pi_{s-1}}\cdot\cdot\cdot\xrightarrow{\pi_{2}} Z_{1}\xrightarrow{\pi_{1}} Z_{0}$, with $Z_{0}\cong\mathbb{P}^{3}$, we give an explicit presentations of the Chow ring $A^{\bullet}(Z_{s})$ of its sky. We prove that, in contrast to the case of sequences of point blow-ups, the skies of two sequences of point and rational curve blow-ups of the same length and even with the same proximity relations may have non-isomorphic Chow rings. Moreover, we explore some necessary conditions for the existence of such an isomorphism under some proximity configurations, and we apply the previous results in order to establish the allowed proximity type between two irreducible components of the exceptional divisor when both are regularly and projectively contractable. 
\end{abstract}

\section{Introduction}

An important algebraic invariant of a projective algebraic variety $X$ is the Chow ring $A^{\bullet}(X)$ of algebraic cycles on $X$ modulo rational equivalence. It is graded by the codimension of cycles, that is, $A^{\bullet}(X)=\oplus_{i=0}^{dim(X)} A^{i}(X)$, where $A^{i}(X)$ denotes the Chow group of codimension $i$, and its ring structure comes from the intersection product. Moreover, $A^{\bullet}(X)$ is isomorphic to the homology ring for an important class of schemes (see \cite[Example 19.1.11]{Fulton98}). However, describing the product structure of $A^{\bullet}(X)$ is in general a challenging task. It is sometimes possible to describe the product structure in certain special cases where the cycle class map is no longer injective. For instance, there are well-known examples of Abelian varieties \cite{Beauville86}, $K3$ surfaces \cite{BeauvilleVoisin04}, Fano fourfolds of $K3$ type \cite{BolognesiLaterveer25}, Cynk-Hulek Calabi-Yau varieties and Schreieder varieties \cite{LaterveerVial20} and cubic hypersurfaces \cite{Anthony21}. Beyond these cases, it is natural to wonder to what extent one may determine the structure of the Chow ring for some other families of varieties.  

Morphisms between varieties induce maps between their Chow groups, and the study of such maps constitutes an active research area. In \cite{Aluffi05} Aluffi introduced the concept of modification system, that is, the collection of varieties mapping properly and birationally onto a fixed variety $X$, $\mathcal{C}_{X}$, and he defined its Chow group as the inverse limit of the Chow groups under push-forward. Then he proved that equivalent systems have isomorphic Chow groups, what allowed him to give a proof of the invariance of Chern classes. In \cite{Aluffi06} the same author defined an enriched notion of Chow groups for algebraic varieties, agreeing with the conventional notion for complete varieties, but enjoying a functorial push-forward for arbitrary maps. This tool allowed him to glue intersection theoretic information across elements of a stratification of a variety, and he gave a direct construction of Chern-Schwartz-MacPherson classes of singular varieties.

In this context, morphisms obtained as composition of sequences of blow-ups play an essential role as shown in the work of Encinas and Villamayor \cite{EncinasVillamayor00} where, by taking the foundational work of Hironaka \cite{Hironaka64} as starting point, they studied a constructive proof of desingularization as the outcome of a process obtained by successively blowing-up the maximum stratum of a function. This result lead some authors to study the intersection theory of the out-coming resolution varieties, but mostly focused on the numerical information coming from the degree of the zero cycles. In \cite{CampilloReguera94} Campillo and Reguera studied morphisms given by composition of a sequence of point blow-ups of smooth $d-$dimensional varieties in terms of combinatorial information coming from the $d-$ary intersection form on divisors with exceptional support and defined its associated weighted polyhedron. In \cite{Tsuchihashi86} Tsuchihashi defined a weighted dual graph of a toric divisor arising as the exceptional set of a resolution of a $3-$dimensional cusp singularity as its dual graph with a pair of integers attached, corresponding to the self-intersection numbers of the double curve defined by the scheme theoretic intersection of the corresponding irreducible components. Moreover, he proved that an arbitrary weighted graph on a compact topological surface is a weighted dual graph of a toric divisor arising as the exceptional set if and only if it satisfies the monodromy condition and the convexity condition.

%of in the following way: Let $E_{j}$ and $E_{k}$ be irreducible components of $\widetilde{X}$ intersecting along a double curve $E_{j,k}$. Let $v_{j}$, $v_{k}$ and $e$ be the vertices and the edge of corresponding to $E_{j}$, $E_{k}$ and $E_{j,k}$, respectively. Then the self-intersection numbers $(E_{j,k}\vert_{E_{j}})^{2}=(E_{j}\cdot E_{k}^{2})$ and $(E_{j,k}\vert_{E_{k}})^{2}=(E_{j}^{2}\cdot E_{k})$ of $E_{j,k}$ on $E_{j}$ and $E_{k}$ are attached to the sides of the vertices $v_{k}$ and $v_{j}$ of the edge $e$, respectively.

The aim of this work is to fully describe an algebraic object naturally attached to the sky of a sequence of point and rational curve blow-ups, that is, its Chow ring $A^{\bullet}(Z_{s})$. This is a natural continuation of our previous work \cite{Camazon24} where we described the Chow ring of the sky of a sequence of point blow-ups. Notice that whereas in \cite{CampilloReguera94} and \cite{Tsuchihashi86} the labels appearing in the weighted polyhedron (resp. weighted graph) are obtained from the $d-$ary (resp. $3-$ary) intersection form, that is, they just contained information about $A_{0}(Z_{s})$, we give an explicit description of the whole ring as a $\mathbb{Z}-$algebra by providing its generators and relations. The article is structured as follows. The first section of this work is devoted to introduce some basic concepts as well as to recall some well-known results about intersection theory and blow-ups. For example, we revisit those describing the Chow ring of the exceptional components as well as some others concerning the relations between the Chow rings of two varieties, one of them obtained as the blow-up of the other one. In the next section, that contains the main results of this article, we describe explicitly the generators of the Chow ring $A^{\bullet}(Z_{s})$ of the sky of a sequence of point and rational curve blow-ups with ground variety $Z_{0}\cong\mathbb{P}^{3}$ and their corresponding relations. Moreover, throughout some clarifying examples, we explore some necessary conditions for the existence of such an isomorphism under some proximity configurations, and we apply the previous results in order to establish the allowed proximity type between two irreducible components of the exceptional divisor when both are regularly and projectively contractable. 

\section{Preliminaries} 
\subsection{Sequences of blow-ups at smooth centers}

Through this work will restrict ourselves to the case of sequences of point and rational curve blow-ups over an algebraically closed field $\mathit{k}$ with ground variety isomorphic to the projective space, that is, sequences $Z_{s}\xrightarrow{\pi_{s}} Z_{s-1}\xrightarrow{\pi_{s-1}}\ldots\xrightarrow{\pi_{2}} Z_{1}\xrightarrow{\pi_{1}} Z_{0}$, where either centers are closed points, $C_{\alpha}=P$, or rational curves, $C_{\alpha}=\mathcal{C}$, and $Z_{0}\cong\mathbb{P}^{3}$. Firstly, we recall some fundamental and general concepts about sequences of blow-ups.

\begin{defn}\label{DefSeqBU}
A sequence of blow-ups over $\mathit{k}$ is defined as as a sequence of morphisms
\begin{equation*}
Z_{s}\xrightarrow{\pi_{s}} Z_{s-1}\xrightarrow{\pi_{s-1}}\cdot\cdot\cdot\xrightarrow{\pi_{2}} Z_{1}\xrightarrow{\pi_{1}} Z_{0},
\end{equation*}
between smooth irreducible $d-$dimensional varieties such that for $i\in\left\{0,\ldots,s-1\right\}$:
\begin{enumerate}
\item $\pi_{i+1}$ is the blow up of $Z_{i}$ at a smooth subvariety $C_{i+1}\xhookrightarrow{} Z_{i}$,
\item $codim(C_{i+1})\geq 2$,
\item if we denote by $E_{j}^{j}\xhookrightarrow{} Z_{j}$ the exceptional divisor of $\pi_{j}$, and for $k>j$ we denote by $E_{j}^{k}$ the strict transform of $E_{j}^{j}$ in $Z_{k}$, then $C_{i+1}$ has normal crossings with $\{E_{1}^{i}, E_{2}^{i},...,E_{i}^{i}\}$.
\end{enumerate}
\end{defn}

We denote by $\pi$ the composition $\pi_{1}\circ\pi_{2}\circ...\circ\pi_{s-1}\circ\pi_{s}$. 

\begin{defn}\label{DefSM}
A morphism $\pi: Z_{s}\rightarrow Z_{0}$ which can be expressed, in at least one way, as a composition of blow-ups with the conditions in Definition \ref{DefSeqBU} will be called a sequential morphism.
\end{defn}

\begin{rem}\label{NoPi}
Given a sequence of blow-ups $(Z_{s},...,Z_{0},\pi)$, we denote by $\pi_{s,i}: Z_{s}\rightarrow Z_{i}$ where $\pi_{s,i}=\pi_{i+1}\circ\pi_{i+2}\circ...\circ\pi_{s-1}\circ\pi_{s}$. 
\end{rem}

\begin{rem}
We will refer to $Z_{0}$ and $Z_{s}$ as the ground and the sky of the sequential morphism $\pi: Z_{s}\rightarrow Z_{0}$ respectively. Moreover we will denote by $E_{\beta}$ the irreducible components over $\mathit{k}$ of the exceptional divisor $E$ of $\pi$, that is we have $E=\bigcup_{\beta} E_{\beta}$.
\end{rem}

The centers $C_{i}$, in general, can have any dimension. We extend the well-known notion of proximity for point blow-ups.

\begin{defn}\label{DefGenProxC}
Given a sequence of blow-ups $(Z_{s}, ..., Z_{0},\pi)$ as in Definition \ref{DefSeqBU}, we say that $C_{j}$ is proximate (resp. $t-$proximate) to $C_{i}$, and write $C_{j}\xrightarrow{} C_{i}$ (resp. $C_{j}\xrightarrow{t} C_{i}$) if $C_{j}\subset E_{i}^{j-1}$ (resp. $C_{j}\cap E_{i}^{j-1}\neq\emptyset$ but $C_{j}\not\subset E_{i}^{j-1}$).
\end{defn}

Note that, if $C_{j}$ is either proximate or $t-$proximate to $C_{i}$ then $j>i$.

\begin{defn}\label{DefGenProxE}
Given a sequential morphism $\pi: Z\rightarrow Z_{0}$ and two irreducible exceptional components $E_{i}, E_{j}\subset E$, then we will say that $E_{j}$ is proximate (resp. $t-$proximate) to $E_{i}$ if there exists a sequence of blow-ups $(Z_{s},...,Z_{0},\pi)$ realizing the sequential morphism $\pi: Z\rightarrow Z_{0}$, such that $C_{j}$ is proximate (resp. $t-$proximate) to $C_{i}$.
\end{defn}

\subsection{Intersection theory of blow-ups}

In this subsection we recall some fundamental results of the intersection theory of blow-ups. 
\begin{rem}
Let $V$ be a $m-$dimensional irreducible subvariety of $X$. Although it is common to denote by $\left[V\right]$ to the equivalence class of $V$ in the Chow ring $A^{\bullet}(X)$, for simplicity we will denote also by $v$ to the equivalence class whenever there is not possible confusion.
\end{rem}
To begin with, we revisit the Chow ring of a projective space $\mathbb{P}^{n}$.
\begin{thm}\cite[Theorem 2.1.]{EisenbudHarris16}\label{ThmPn}
The Chow ring of $\mathbb{P}^{n}$ is 
\begin{equation*}
A^{\bullet}(\mathbb{P}^{n})=\mathbb{Z}\left[\varsigma\right]/(\varsigma^{n+1}),
\end{equation*}
where $\varsigma\in A^{1}(\mathbb{P}^{n})$ is the rational equivalence class of a hyperplane; more generally, the
class of a variety of codimension $k$ and degree $d$ is $d\varsigma^{k}$.
\end{thm}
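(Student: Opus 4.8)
The plan is to treat the result in three stages: determine the underlying graded group $A^{\bullet}(\mathbb{P}^n)$, then fix the multiplicative structure, and finally read off the class of an arbitrary subvariety.

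First I would compute the Chow groups additively via the standard affine cell decomposition. Choosing a flag of linear subspaces $\mathbb{P}^0\subset\mathbb{P}^1\subset\cdots\subset\mathbb{P}^n$, each successive difference $\mathbb{P}^k\setminus\mathbb{P}^{k-1}$ is isomorphic to the affine cell $\mathbb{A}^k$. The two inputs I would invoke are the right-exact localization sequence $A_k(Y)\to A_k(X)\to A_k(X\setminus Y)\to 0$ attached to a closed subscheme $Y\subset X$, together with the vanishing $A_j(\mathbb{A}^m)=0$ for $j<m$, which expresses that affine space carries no nontrivial cycle classes below its top dimension. Feeding the flag into the localization sequence and inducting on $n$ shows that $A^k(\mathbb{P}^n)$ is free of rank one, generated by the class of a codimension-$k$ linear subspace.

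Second, I would identify the ring structure geometrically. Since two general hyperplanes meet transversally and transversal intersection computes the intersection product, the $k$-fold power $\varsigma^k$ equals the class of a codimension-$k$ linear subspace, hence is a generator of $A^k(\mathbb{P}^n)$ for $0\le k\le n$ by the previous step. The only relation is $\varsigma^{n+1}=0$, which follows because a hyperplane and a general point (representing $\varsigma^n$) are disjoint, so their product vanishes. This produces a surjection $\mathbb{Z}[\varsigma]/(\varsigma^{n+1})\twoheadrightarrow A^{\bullet}(\mathbb{P}^n)$ that is an isomorphism after a rank count in each graded piece.

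Finally, for a subvariety $V$ of codimension $k$ and degree $d$, I would write $[V]=m\,\varsigma^k$ with $m\in\mathbb{Z}$ uniquely determined, and compute $m$ by intersecting with a general linear subspace $L$ of complementary dimension $n-k$: one gets $[V]\cdot[L]=m\,\varsigma^n=m\,[\mathrm{pt}]$, while $V\cap L$ consists of $\deg V=d$ reduced points by the definition of degree, forcing $m=d$. The main obstacle is really the additive step: the exactness of the localization sequence and the vanishing $A_j(\mathbb{A}^m)=0$ for $j<m$ are the substantive inputs guaranteeing both freeness and the absence of relations beyond $\varsigma^{n+1}$; once these are in hand, the remaining arguments are transversality bookkeeping.
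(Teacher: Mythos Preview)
The paper does not give its own proof of this statement: it is quoted verbatim as \cite[Theorem 2.1.]{EisenbudHarris16} and used as background input, with no accompanying proof environment. Your argument is correct and is essentially the standard one found in that reference (affine stratification plus localization to get the additive structure, transversality of generic linear spaces for the multiplication, and intersection with a complementary linear space to read off the degree), so there is nothing to compare against in the paper itself.
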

As a consequence of Theorem \ref{ThmPn}, we have that the Chow ring of the ground variety $A^{\bullet}(Z_{0})$ is isomorphic to
\begin{equation*}
A^{\bullet}(Z_{0})\cong\mathbb{Z}\left[u\right]/(u^{4}), 
\end{equation*}
by sending $u$ to $h$, where $h\in A^{1}(Z_{0})$ is the rational equivalence class of any hyperplane $\left[H\right]$ in $\mathbb{P}^{3}$.

The exceptional divisor obtained by blowing-up at a smooth center $C_{\alpha}\subset Z_{\alpha-1}$ has a projective bundle structure over this one, being the following well-known result an explicit description of the Chow ring of this special type of projective varieties.
\begin{thm}\cite[Theorem 9.6.]{EisenbudHarris16}\label{ThmCRPB}
Let $V$ be a vector bundle of rank $r+1$ on a smooth projective variety $X$, and let $\varsigma=c_{1}(\mathcal{O}_{P(V)}(1))\in A^{1}(P(V))$, and $p: P(V)\rightarrow X$ the projection of the induced projective bundle. The map $p^{*}: A(X)\rightarrow A(P(V))$ is an injective ring homomorphism, and via this map one has the isomorphism of $A(X)$-algebras given by
\begin{equation*}
A(P(V))\cong A(X)\left[\varsigma\right]/(\varsigma^{r+1}+c_{1}(V)\varsigma^{r}+\cdot\cdot\cdot+c_{r+1}(V))
\end{equation*}
In particular, the group homomorphism $A(X)^{\oplus r+1}\rightarrow A(P(V))$ / given by $(\alpha_{0},...,\alpha_{r})\mapsto\sum\varsigma^{i}p^{*}(\alpha_{i})$ is an isomorphism, so that
\begin{equation*}
A(P(V))\cong\bigoplus_{i=0}^{r}\varsigma^{i}A(X)
\end{equation*}
as groups.
\end{thm}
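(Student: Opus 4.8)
The plan is to establish the statement in three stages: generation of $A(P(V))$ over $A(X)$ by the powers $1, \varsigma, \ldots, \varsigma^{r}$; the monic relation these powers satisfy; and finally their $A(X)$-linear independence. The first two stages give the ring presentation and the surjectivity of the displayed module map, while the third upgrades that map to an isomorphism.

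For generation I would argue by Noetherian induction on $X$. Pick a nonempty open $U \subseteq X$ on which $V$ is trivial, so that $p^{-1}(U) \cong U \times \mathbb{P}^{r}$, and set $Z = X \setminus U$ with its reduced structure. The localization sequence
\begin{equation*}
A(p^{-1}(Z)) \longrightarrow A(P(V)) \longrightarrow A(p^{-1}(U)) \longrightarrow 0,
\end{equation*}
which is compatible with the flat pullback $p^{*}$ and with the operation $\varsigma \cap (-)$, reduces generation to two inputs. Over $Z$ we have $\dim Z < \dim X$, so the inductive hypothesis applies to the restricted bundle on $p^{-1}(Z)$. Over $U$ the bundle is trivial, and using the affine stratification $\mathbb{P}^{r} = \mathbb{A}^{r} \sqcup \mathbb{P}^{r-1}$ together with the homotopy invariance $A(Y \times \mathbb{A}^{n}) \cong A(Y)$ and Theorem \ref{ThmPn}, one obtains $A(U \times \mathbb{P}^{r}) \cong \bigoplus_{i=0}^{r} \varsigma^{i} A(U)$. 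Chasing classes through the exact sequence then shows that every class on $P(V)$ is an $A(X)$-combination of the $\varsigma^{i}$.

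For the relation I would use the tautological exact sequence on $P(V)$. Let $S \hookrightarrow p^{*}V$ be the tautological sub-line-bundle, with $c_{1}(S) = -\varsigma$, and let $Q = p^{*}V / S$ be the rank-$r$ quotient. The Whitney formula gives $c(Q) = p^{*}c(V) \cdot c(S)^{-1} = p^{*}c(V)\,(1 + \varsigma + \varsigma^{2} + \cdots)$, and since $Q$ has rank $r$ its Chern class vanishes in degree $r+1$; extracting that degree yields
\begin{equation*}
\varsigma^{r+1} + p^{*}c_{1}(V)\,\varsigma^{r} + \cdots + p^{*}c_{r+1}(V) = 0.
\end{equation*}
As $p^{*}$ is a ring map, this is precisely the defining relation, so $A(P(V))$ is a quotient of $A(X)[\varsigma]/(\varsigma^{r+1} + c_{1}(V)\varsigma^{r} + \cdots + c_{r+1}(V))$.

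To turn surjectivity into an isomorphism I would prove the $\varsigma^{i}$ are $A(X)$-independent using the Gysin pushforward $p_{*}$ and the Segre classes $s_{k}(V) = p_{*}(\varsigma^{r+k} \cap p^{*}(-))$, which satisfy $s_{0}(V) = 1$ and $p_{*}(\varsigma^{j} \cap p^{*}(-)) = 0$ for $j < r$. Applying the operators $p_{*}(\varsigma^{r-m} \cap (-))$ for $m = r, r-1, \ldots, 0$ to a hypothetical relation $\sum_{i=0}^{r} \varsigma^{i} p^{*}\alpha_{i} = 0$ and invoking the projection formula produces a unitriangular linear system whose diagonal entries are $s_{0}(V) = 1$; solving it from the top forces every $\alpha_{i} = 0$. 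The step I expect to cost the most effort is the generation statement: making the localization-and-induction argument rigorous requires the compatibility of the localization sequence with $p^{*}$ and with $\varsigma \cap (-)$, plus the explicit trivial-bundle base case built from homotopy invariance and Theorem \ref{ThmPn}. Once generation is secured, the relation and the independence reduce to essentially formal Chern- and Segre-class manipulations.
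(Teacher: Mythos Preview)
The paper does not prove this theorem: it is quoted verbatim as a preliminary result with the citation \cite[Theorem~9.6.]{EisenbudHarris16} and no argument is supplied. So there is no ``paper's own proof'' to compare your attempt against.

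That said, your sketch is correct and is essentially the standard proof one finds in the cited reference (and in Fulton): Noetherian induction plus localization for generation, the vanishing of $c_{r+1}(Q)$ for the monic relation, and the Segre-class/pushforward unitriangular trick for independence. One small point of hygiene: in the independence step you should make explicit that you are using the projection formula $p_{*}(\varsigma^{k}\cap p^{*}\alpha)=p_{*}(\varsigma^{k})\cdot\alpha$, and that the identities $p_{*}(\varsigma^{j})=0$ for $j<r$ and $p_{*}(\varsigma^{r})=[X]$ (i.e.\ $s_{0}(V)=1$) are the inputs that make the system unitriangular; you invoke these but it is worth stating them as the reason the argument goes through, since they are exactly what requires $p$ to be proper and equidimensional of relative dimension $r$.
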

Since for all $\alpha$ it is satisfied that either $E_{\alpha}^{\alpha}\cong\mathbb{P}^{2}$ or $E_{\alpha}^{\alpha}\cong\mathbb{F}_{\delta}$, where by $\mathbb{F}_{\delta}$ we denote a Hirzebruch surface of degree $\delta$ , then it follows from Theorem \ref{ThmPn} and Theorem \ref{ThmCRPB} that:
$$
A^{\bullet}(E_{\alpha}^{\alpha})\cong\begin{cases}
\mathbb{Z}\left[s\right]/(s^{3}) & \text{by sending}\enspace s\enspace\text{to}\enspace \varsigma_{\alpha}\enspace\text{if}\enspace C_{\alpha}=P_{\alpha}, \\ 
\mathbb{Z}\left[t,u\right]/(t^{2}+c_{1}(N_{\mathcal{C}_{\alpha}/Z_{\alpha-1}})t\cdot u, u^{2}) & \text{by sending}\enspace t, u\enspace\text{to}\enspace \varsigma_{\alpha}\enspace\text{and}\enspace w \\
& \text{ respectively if}\enspace C_{\alpha}=\mathcal{C}_{\alpha}, 
\end{cases}
$$
where $\varsigma_{\alpha}\in A^{1}(E_{\alpha}^{\alpha})$ is the rational class of any hyperplane and $w\in A^{1}(E_{\alpha}^{\alpha})$ corresponds to the rational class of a generic fiber. 

In the case where $E_{\alpha}^{\alpha}\cong\mathbb{F}_{\delta}$, the following result characterizes the classes of the irreducible non-singular rational curves on it. First we recall that given a Hirzebruch surface $X\cong\mathbb{F}_{\delta}$ there just exists a minimal section $S_{0}\subset X$ that verifies $s_{0}\cdot s_{0}=-\delta$.
\begin{prop}\cite[Proposition 5.3.5.]{Camazon25}\label{ProNSingRatCOnHir}
Given a Hirzebruch surface $\mathbb{F}_{\delta}$, then any irreducible non-singular rational curve $C\subset\mathbb{F}_{\delta}$ is of one of the following types:
\begin{enumerate}
\item either a section of class $s_{0}+bf$ with $b=0$ or $b\geq\delta$,
\item or a fiber $f$,
\item or a curve of class $2s_{0}+2f$ if $\delta=1$,
\item or a curve of class $as_{0}+f$ with $a>0$ if $\delta=0$.
\end{enumerate}
\end{prop}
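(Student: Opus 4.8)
The plan is to reduce the classification to a short numerical analysis built on the intersection theory of $\mathbb{F}_{\delta}$ together with the adjunction formula. First I would record the standard intersection numbers on $\mathbb{F}_{\delta}$, namely $s_0^2 = -\delta$, $s_0\cdot f = 1$ and $f^2 = 0$, so that every curve class is written uniquely as $as_0 + bf$ with $a,b\in\mathbb{Z}$. For an irreducible curve $C$, pairing with the base-point-free class $f$ gives $a = C\cdot f \geq 0$, and if $a = 0$ then $C$ is contained in a fiber and hence equals a fiber, yielding case (ii).

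Next, for $a\geq 1$ I would use that $C$ is nonsingular and rational, so its arithmetic genus vanishes; applying the adjunction formula $p_a(C) = 1 + \tfrac12(C^2 + C\cdot K)$ with $K = -2s_0 - (\delta+2)f$ produces the single relation $C\cdot(C+K) = -2$. Expanding this in the basis $\{s_0,f\}$, it factors neatly as $(a-1)\bigl(2b - \delta a - 2\bigr) = 0$, which splits the argument into the two cases $a = 1$ and $2b = \delta a + 2$.

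Then I would invoke positivity of the intersection of two distinct irreducible curves against the negative section $s_0$. In the case $a = 1$, either $C = s_0$, forcing $b = 0$, or $C \neq s_0$, in which case $C\cdot s_0 = b - \delta \geq 0$ gives $b \geq \delta$; this is exactly case (i). In the remaining case $2b = \delta a + 2$ with $a \geq 2$, the inequality $C\cdot s_0 = b - \delta a \geq 0$ combined with the genus relation forces $\delta a \leq 2$, so $\delta\in\{0,1\}$: when $\delta = 1$ one gets $a = 2$, $b = 2$ (case iii), and when $\delta = 0$ one gets $b = 1$ with $a \geq 2$ (case iv, the stated bound $a>0$ absorbing the overlapping section $s_0 + f$).

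The main obstacle, and the step that requires the most care, is the interaction between the purely numerical genus condition and the geometric positivity constraints: the equation $C\cdot(C+K) = -2$ by itself admits solutions, for instance $4s_0 + 3f$ on $\mathbb{F}_1$, that are not realized by any irreducible nonsingular rational curve, and it is precisely the inequality $C\cdot s_0 \geq 0$ that eliminates them. I would therefore be careful to justify that inequality (two distinct irreducible curves meet in an effective zero-cycle, hence with nonnegative degree) and to handle the boundary case $C = s_0$ separately, since there $C\cdot s_0 = -\delta$ can be negative and the nonnegativity argument does not apply.
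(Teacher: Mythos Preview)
The paper does not actually prove this proposition: it is quoted from the author's thesis \cite{Camazon25} and stated without argument, so there is no in-paper proof to compare against. That said, your proposal is correct and is the standard route one would expect in any treatment of curves on ruled surfaces.

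Your computation is clean: the adjunction identity $C\cdot(C+K)=-2$ does factor as $(a-1)(2b-\delta a-2)=0$ on $\mathbb{F}_\delta$, and the positivity constraint $C\cdot s_0\ge 0$ for $C\neq s_0$ is exactly what rules out the spurious numerical solutions you flag. The only cosmetic point is the overlap you already noticed between cases (i) and (iv) when $\delta=0$: your dichotomy naturally yields $a=1$ versus $a\ge 2$, whereas the statement as written allows $a=1$ in (iv), so the class $s_0+f$ is listed twice. This is harmless for the ``only if'' direction you are proving.
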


Once the Chow ring of the exceptional components $E_{\alpha}^{\alpha}$ and the ground variety $Z_{0}$ is described, then the following question naturally arises: What is the relation between the Chow rings of the varieties $Z_{\alpha}$ and $Z_{\alpha+1}$?
\begin{equation}\label{BUDiag}
\xymatrix{ E_{\alpha+1}^{\alpha+1}\ar[r]^{j_{\alpha+1}}\ar[d]_{g_{\alpha+1}} & Z_{\alpha+1}\ar[d]^{\pi_{\alpha+1}} \\
C_{\alpha+1}\ar[r]^{i_{\alpha+1}} & Z_{\alpha} }
\end{equation}
Before answering it, we recall the following theorem that provides us with a pull-back formula under blow-ups.
\begin{thm}\cite[Theorem 6.7]{Fulton98}\label{ThmBUF}
(Blow-up Formula). Let $V$ be a $k-$dimensional subvariety of $Y$, and let $\widetilde{V}\subset\widetilde{Y}$ be the proper transform of $V$, i.e. the blow-up of $V$ along $V\cap X$.
Then
\begin{equation}
\pi^{*}v=\widetilde{v}+j_{*}\left\{c(\mathcal{Q})\cap g^{*}s(V\cap X, V)\right\}_{k}
\end{equation}
in $A_{k}\widetilde{Y}$, where $\mathcal{Q}=g^{*}\left(\frac{N_{X/Y}}{\mathcal{O}_{E}(-1)}\right)$.
\end{thm}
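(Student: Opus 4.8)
The plan is to follow Fulton's method: localize the asserted identity onto the exceptional divisor $E$ and then evaluate the resulting class through the projective bundle structure $g\colon E\to X$. Throughout write $W=V\cap X$ for the scheme-theoretic intersection, $d=\mathrm{codim}(X,Y)$, and $\zeta=c_{1}(\mathcal{O}_{E}(1))$, and recall that since $X\hookrightarrow Y$ is a regular embedding one has $E=P(N_{X/Y})$ with tautological sequence $0\to\mathcal{O}_{E}(-1)\to g^{*}N_{X/Y}\to\mathcal{Q}\to 0$, so that $c(\mathcal{Q})=c(g^{*}N_{X/Y})\cdot(1-\zeta)^{-1}$ by the Whitney formula. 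Since $X\hookrightarrow Y$ is regular, $\pi$ is a local complete intersection morphism and $\pi^{*}$ is the associated refined Gysin pull-back.

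First I would observe that $\pi$ restricts to an isomorphism $\widetilde{Y}\setminus E\xrightarrow{\sim}Y\setminus X$, under which the total transform of $V$ agrees with its proper transform $\widetilde{V}$; hence the cycle $\pi^{*}v-\widetilde{v}$ vanishes on the open complement of $E$ and, by the localization sequence, lies in the image of $j_{*}\colon A_{k}(E)\to A_{k}(\widetilde{Y})$. Writing $\pi^{*}v-\widetilde{v}=j_{*}\beta$, the task reduces to identifying $\beta\in A_{k}(E)$ with $\{c(\mathcal{Q})\cap g^{*}s(W,V)\}_{k}$. By Theorem \ref{ThmCRPB} the group $A_{k}(E)$ is the direct sum of the subgroups $\zeta^{i}\cdot g^{*}A_{\bullet}(X)$, so such a class is completely determined by the collection of push-forwards $g_{*}(\zeta^{m}\cap\beta)$; I would therefore compute these ``moments'' on both sides.

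To compute the moments of $\beta$ I would intersect the relation $j_{*}\beta=\pi^{*}v-\widetilde{v}$ with powers of $E$, using the self-intersection formula $j^{*}j_{*}=c_{1}(\mathcal{O}_{E}(-1))\cap(-)=-\zeta\cap(-)$ on the left, and on the right the compatibilities $j^{*}\pi^{*}v=g^{*}i^{*}v$ (from $\pi\circ j=i\circ g$) together with $j^{*}\widetilde{v}=\eta_{*}[\widetilde{W}]$, where $\eta\colon\widetilde{W}=P(C_{W}V)\hookrightarrow E$ is the exceptional divisor of $\widetilde{V}=\mathrm{Bl}_{W}V$ embedded in $E$ via the closed embedding of cones $C_{W}V\hookrightarrow g^{*}N_{X/Y}|_{W}$. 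The Gysin pull-back $i^{*}v$ is the refined intersection class $\{c(N_{X/Y})\cap s(W,V)\}_{k-d}$, so the factor $c(N_{X/Y})$ enters here, while the iterated self-intersections generate the geometric series $(1-\zeta)^{-1}=\sum_{i\ge 0}\zeta^{i}$; assembling the two through the Whitney relation above is exactly what produces the factor $c(\mathcal{Q})$, and the push-forward $\eta_{*}$ along $\widetilde{W}\to W$ reproduces, by the very definition of the Segre class, the term $g^{*}s(W,V)$. Matching moments via Theorem \ref{ThmCRPB} then forces $\beta=\{c(\mathcal{Q})\cap g^{*}s(W,V)\}_{k}$.

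The principal obstacle is the bookkeeping between the two projective bundle structures in play: the Segre class $s(W,V)$ is defined on $\widetilde{W}=P(C_{W}V)$ with its own tautological class, whereas the statement is phrased on $E=P(N_{X/Y})$ through $\mathcal{Q}$, and reconciling them requires the compatibility $\mathcal{O}_{\widetilde{Y}}(-E)|_{E}\cong\mathcal{O}_{E}(1)$ together with a careful projection-formula argument that transports the computation from $\widetilde{W}$ up to $E$ without dropping the excess contribution recorded by $\mathcal{Q}$. A secondary but essential point is justifying that $C_{W}V$ genuinely embeds as a subcone of $g^{*}N_{X/Y}|_{W}$; this is precisely where the regularity of $X\hookrightarrow Y$ is indispensable, for otherwise $N_{X/Y}$ and hence $\mathcal{Q}$ would fail to be a vector bundle and the right-hand side would not even be defined. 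Once these two points are secured, the remaining steps are the routine Segre- and Chern-class manipulations of the intersection-theoretic toolkit recalled above.
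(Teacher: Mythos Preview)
The paper does not supply a proof of this theorem: it is quoted verbatim from \cite[Theorem~6.7]{Fulton98} as a preliminary result, with no argument given. Consequently there is no ``paper's own proof'' to compare your proposal against. Your sketch follows the standard strategy of Fulton's original proof---localize the difference $\pi^{*}v-\widetilde{v}$ onto $E$, then identify the resulting class by computing its Segre moments through the projective bundle structure $E=P(N_{X/Y})$---and the outline is sound as a high-level summary of that argument. If anything, note that in Fulton's actual treatment the identification of $\beta$ is carried out more directly via the deformation-to-the-normal-cone construction underlying the refined Gysin map rather than by an explicit moment-matching against Theorem~\ref{ThmCRPB}, but the ingredients you list (the embedding $C_{W}V\hookrightarrow N_{X/Y}|_{W}$, the relation $\mathcal{O}_{\widetilde{Y}}(-E)|_{E}\cong\mathcal{O}_{E}(1)$, and the Whitney decomposition producing $c(\mathcal{Q})$) are exactly the ones needed.
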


\begin{rem}
For $j>i$ we denote by $E_{i}^{j*}$ the total transform of $E_{i}^{i}$ by the morphism $\pi_{j,i}: Z_{j}\rightarrow Z_{i}$. By an abuse of notation $E_{i}^{i*}=E_{i}^{i}$. Note that by definition of the total transform and Theorem \ref{ThmBUF}, we have
\begin{equation*}
e_{i}^{k*}=e_{i}^{k}+\sum_{j>i} p_{ij}e_{j}^{k*}
\end{equation*}
where $p_{ij}=1$ if $i<j\leq k$ and $C_{j}$ is proximate to $C_{i}$ and $p_{ij}=0$ in any other case. \\
\end{rem}

The two following propositions give us an answer on how the Chow rings $A^{\bullet}(Z_{\alpha})$ and $A^{\bullet}(Z_{\alpha+1})$ are related by establishing the generators of the Chow groups $\left\{A_{k}(Z_{\alpha+1})\right\}_{k}$ as well as its multiplication rules. 
\begin{prop}\cite[Proposition 6.7.]{Fulton98}\label{ProGenCRBU}
\begin{enumerate}
\item (Key Formula). For all $x\in A_{k} C_{\alpha+1}$,
\begin{equation*}
\pi_{\alpha+1}^{*}i_{\alpha+1 *}(x)=j_{\alpha+1 *}(c_{d-1}(\mathcal{Q})\cap g_{\alpha+1}^{*}x)
\end{equation*}
in $A_{k} Z_{\alpha+1}$.
\item For all $y\in A_{k} Z_{\alpha}$, $\pi_{\alpha+1 *}\pi_{\alpha+1}^{*}y=y$ in $A_{k} Z_{\alpha}$.
\item If $\widetilde{x}\in A_{k} E_{\alpha+1}^{\alpha+1}$, and $g_{\alpha+1 *}\widetilde{x}=j_{\alpha+1}^{*}j_{\alpha+1 *}\widetilde{x}=0$, then $x=0$.
\item If $\widetilde{y}\in A_{k} Z_{\alpha+1}$, and $\pi_{\alpha+1 *}\widetilde{y}=j_{\alpha+1}^{*}\widetilde{y}=0$, then $y=0$.
\item There are split exact sequences
\begin{equation*}
0\xrightarrow{} A_{k} C_{\alpha+1}\xrightarrow{l} A_{k} E_{\alpha+1}^{\alpha+1}\oplus A_{k} Z_{\alpha}\xrightarrow{m} A_{k} Z_{\alpha+1}\xrightarrow{} 0
\end{equation*}
with $l(x)=(c_{d-1}(\mathcal{Q})\cap g_{\alpha+1}^{*}x, -i_{\alpha+1 *}x)$, and $m(\widetilde{x},y)=j_{\alpha+1 *}\widetilde{x}+\pi_{\alpha+1}^{*}y$. A left inverse for
$l$ is given by $(\widetilde{x}, y)\xrightarrow{}  g_{\alpha+1 *}(\widetilde{x})$.
\end{enumerate}
\end{prop}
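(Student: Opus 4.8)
The plan is to establish the five assertions in the natural order $(1)$, $(2)$, $(3)$, $(5)$, $(4)$, treating the \emph{Key Formula} $(1)$ as the computational engine and the projective bundle structure of $E_{\alpha+1}^{\alpha+1}=P(N_{C_{\alpha+1}/Z_{\alpha}})$ as the structural backbone. Throughout I would rely on four tools already available: the blow-up formula (Theorem \ref{ThmBUF}), the projective bundle description of $A^{\bullet}(P(V))$ (Theorem \ref{ThmCRPB}), the projection formula, and the self-intersection formula $j_{\alpha+1}^{*}j_{\alpha+1 *}\widetilde{x}=c_{1}(N_{E_{\alpha+1}^{\alpha+1}/Z_{\alpha+1}})\cap\widetilde{x}$, together with the identification $N_{E_{\alpha+1}^{\alpha+1}/Z_{\alpha+1}}=\mathcal{O}_{E_{\alpha+1}^{\alpha+1}}(-1)$, so that $c_{1}=-\varsigma_{\alpha+1}$. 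Here $c_{d-1}(\mathcal{Q})$ is to be read as the top Chern class of $\mathcal{Q}$, whose rank is the codimension of $C_{\alpha+1}$ in $Z_{\alpha}$ minus one.

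To prove the Key Formula $(1)$, I would apply Theorem \ref{ThmBUF} not to an arbitrary subvariety but to a subvariety $W\subseteq C_{\alpha+1}$ representing a class $x\in A_{k}C_{\alpha+1}$. Since $W$ lies inside the center, $W\cap C_{\alpha+1}=W$, its proper transform is the blow-up of $W$ along itself and hence vanishes, while $s(W\cap C_{\alpha+1},W)=s(W,W)=[W]$. The blow-up formula then collapses to $\pi_{\alpha+1}^{*}i_{\alpha+1 *}x=j_{\alpha+1 *}\{c(\mathcal{Q})\cap g_{\alpha+1}^{*}x\}_{k}$, and a dimension count singling out the $k$-dimensional component leaves precisely the top Chern class $c_{d-1}(\mathcal{Q})$, giving $(1)$. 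Assertion $(2)$ is then immediate: $\pi_{\alpha+1}$ is a proper birational morphism, an isomorphism away from $C_{\alpha+1}$, so it has degree one and $\pi_{\alpha+1 *}\pi_{\alpha+1}^{*}=\mathrm{id}$.

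For the non-degeneracy statement $(3)$, I would expand a class $\widetilde{x}\in A_{k}E_{\alpha+1}^{\alpha+1}$ uniquely as $\sum_{i}\varsigma_{\alpha+1}^{i}\cap g_{\alpha+1}^{*}a_{i}$ via Theorem \ref{ThmCRPB}. The hypothesis $g_{\alpha+1 *}\widetilde{x}=0$ kills the top coefficient (which is the only one $g_{\alpha+1 *}$ detects), while $j_{\alpha+1}^{*}j_{\alpha+1 *}\widetilde{x}=-\varsigma_{\alpha+1}\cap\widetilde{x}=0$ shifts the remaining coefficients up by one power of $\varsigma_{\alpha+1}$; uniqueness of the expansion then forces every $a_{i}=0$, so $\widetilde{x}=0$. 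Assembling the split exact sequence $(5)$ is the core of the argument: surjectivity of $m$ follows from the localization sequence for the open immersion $Z_{\alpha+1}\setminus E_{\alpha+1}^{\alpha+1}\cong Z_{\alpha}\setminus C_{\alpha+1}$, since any class agrees off $E_{\alpha+1}^{\alpha+1}$ with some $\pi_{\alpha+1}^{*}y$ and the difference is supported on $E_{\alpha+1}^{\alpha+1}$, hence lies in the image of $j_{\alpha+1 *}$; the relation $m\circ l=0$ is exactly the Key Formula; and the stated left inverse $(\widetilde{x},y)\mapsto g_{\alpha+1 *}\widetilde{x}$ works because $g_{\alpha+1 *}(c_{d-1}(\mathcal{Q})\cap g_{\alpha+1}^{*}x)=x$, which simultaneously yields injectivity of $l$ and splitness. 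Finally $(4)$ drops out: writing $\widetilde{y}=j_{\alpha+1 *}\widetilde{x}+\pi_{\alpha+1}^{*}y$ and using $\pi_{\alpha+1 *}\widetilde{y}=0$ to solve $y=-i_{\alpha+1 *}g_{\alpha+1 *}\widetilde{x}$, the Key Formula rewrites $\widetilde{y}$ as $j_{\alpha+1 *}\widetilde{x}'$ with $g_{\alpha+1 *}\widetilde{x}'=0$, and $j_{\alpha+1}^{*}\widetilde{y}=0$ supplies the second hypothesis of $(3)$, forcing $\widetilde{x}'=0$ and hence $\widetilde{y}=0$.

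The \textbf{main obstacle} I anticipate is exactness in the middle of $(5)$: given $(\widetilde{x},y)$ with $j_{\alpha+1 *}\widetilde{x}+\pi_{\alpha+1}^{*}y=0$, one must produce a preimage under $l$. The natural candidate is $x=g_{\alpha+1 *}\widetilde{x}$, but matching the first coordinate requires showing that the correction term $\widetilde{x}'=\widetilde{x}-c_{d-1}(\mathcal{Q})\cap g_{\alpha+1}^{*}g_{\alpha+1 *}\widetilde{x}$ vanishes, and this is exactly where both vanishing conditions $g_{\alpha+1 *}\widetilde{x}'=0$ and $j_{\alpha+1}^{*}j_{\alpha+1 *}\widetilde{x}'=0$ must be fed into part $(3)$. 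Arranging the signs and the interplay among the Key Formula, the functorialities $\pi_{\alpha+1 *}j_{\alpha+1 *}=i_{\alpha+1 *}g_{\alpha+1 *}$ and $j_{\alpha+1}^{*}\pi_{\alpha+1}^{*}=g_{\alpha+1}^{*}i_{\alpha+1}^{*}$ coming from the square \ref{BUDiag}, and the self-intersection formula is the delicate bookkeeping on which the whole proof hinges.
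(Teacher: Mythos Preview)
The paper does not supply its own proof of this proposition: it is quoted verbatim as \cite[Proposition~6.7]{Fulton98} and used as a black box, so there is no argument in the paper to compare against. Your outline is essentially a faithful reconstruction of Fulton's original proof of Proposition~6.7, including the order $(a),(b),(c),(e),(d)$, the specialization of the blow-up formula to a subvariety of the center for the Key Formula, the projective-bundle expansion argument for $(3)$, and the use of $(3)$ together with the functoriality $\pi_{\alpha+1 *}j_{\alpha+1 *}=i_{\alpha+1 *}g_{\alpha+1 *}$ to deduce $(4)$. The anticipated ``obstacle'' you flag is exactly the step Fulton handles, and your candidate $x=g_{\alpha+1 *}\widetilde{x}$ together with the invocation of $(3)$ on the correction term is the right mechanism.
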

\begin{prop}\cite[Example 8.3.9.]{Fulton98}\label{ProMRBU}
Let consider the blow-up diagram \ref{BUDiag}, with $Z_{\alpha}$, $C_{\alpha+1}$, and therefore $Z_{\alpha+1}$, $E_{\alpha+1}^{\alpha+1}$ non singular. The ring structure on $A^{\bullet}(Z_{\alpha+1})$ is determined by the following rules:
\begin{enumerate}
\item $\pi_{\alpha+1}^{*}y\cdot \pi_{\alpha+1}^{*}y^{'}=\pi_{\alpha+1}^{*}(y\cdot y^{'})$.
\item $j_{\alpha+1 *}(\widetilde{x})\cdot j_{\alpha+1 *}(\widetilde{x}^{'})=j_{\alpha+1 *}(c_{1}(j^{\alpha+1 *}\mathcal{O}_{Z_{\alpha+1}}(E_{\alpha+1}^{\alpha+1}))\cdot\widetilde{x}\cdot\widetilde{x}^{'})$.
\item $\pi_{\alpha+1}^{*}(y)\cdot j_{\alpha+1 *}(\widetilde{x})=j_{\alpha+1 *}((g_{\alpha+1}^{*}i^{\alpha+1 *}y)\cdot\widetilde{x})$.
\end{enumerate}
\end{prop}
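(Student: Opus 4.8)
The plan is to derive all three multiplication rules from a small number of standard facts---contravariant functoriality of the Gysin pullback, the projection formula for the proper embedding $j_{\alpha+1}$, and the self-intersection formula---and to observe beforehand that these three products suffice to recover the whole ring structure. Indeed, by the surjectivity of the map $m$ in the split exact sequence of Proposition \ref{ProGenCRBU}(v), every class in $A^{\bullet}(Z_{\alpha+1})$ can be written as $j_{\alpha+1 *}\widetilde{x}+\pi_{\alpha+1}^{*}y$ for suitable $\widetilde{x}\in A^{\bullet}(E_{\alpha+1}^{\alpha+1})$ and $y\in A^{\bullet}(Z_{\alpha})$. Since the intersection product is $\mathbb{Z}$-bilinear, expanding the product of two such expressions by distributivity reduces the computation to the three ``pure'' products $\pi_{\alpha+1}^{*}y\cdot\pi_{\alpha+1}^{*}y'$, $j_{\alpha+1 *}\widetilde{x}\cdot j_{\alpha+1 *}\widetilde{x}'$ and $\pi_{\alpha+1}^{*}y\cdot j_{\alpha+1 *}\widetilde{x}$; these are precisely rules (i), (ii) and (iii), so proving them determines the ring structure.

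For rule (i), I would note that $\pi_{\alpha+1}$ is the blow-up of a smooth variety along a smooth center, hence a local complete intersection morphism between smooth varieties. Its Gysin pullback $\pi_{\alpha+1}^{*}$ is therefore a homomorphism of graded rings, which is exactly the stated identity $\pi_{\alpha+1}^{*}y\cdot\pi_{\alpha+1}^{*}y'=\pi_{\alpha+1}^{*}(y\cdot y')$. For rule (iii), I would apply the projection formula to the proper map $j_{\alpha+1}$, giving
\begin{equation*}
\pi_{\alpha+1}^{*}(y)\cdot j_{\alpha+1 *}(\widetilde{x})=j_{\alpha+1 *}\bigl(j_{\alpha+1}^{*}\pi_{\alpha+1}^{*}(y)\cdot\widetilde{x}\bigr).
\end{equation*}
The square (\ref{BUDiag}) commutes, so $\pi_{\alpha+1}\circ j_{\alpha+1}=i_{\alpha+1}\circ g_{\alpha+1}$ and hence $j_{\alpha+1}^{*}\pi_{\alpha+1}^{*}=g_{\alpha+1}^{*}i_{\alpha+1}^{*}$; substituting this identity yields rule (iii) immediately.

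For rule (ii), I would again start from the projection formula,
\begin{equation*}
j_{\alpha+1 *}(\widetilde{x})\cdot j_{\alpha+1 *}(\widetilde{x}')=j_{\alpha+1 *}\bigl(j_{\alpha+1}^{*}j_{\alpha+1 *}(\widetilde{x}')\cdot\widetilde{x}\bigr),
\end{equation*}
and then invoke the self-intersection formula for the divisorial (hence regular, codimension-one) embedding $j_{\alpha+1}$, namely $j_{\alpha+1}^{*}j_{\alpha+1 *}(\widetilde{x}')=c_{1}(N)\cdot\widetilde{x}'$, where $N$ is the normal bundle of $E_{\alpha+1}^{\alpha+1}$ in $Z_{\alpha+1}$. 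Since $E_{\alpha+1}^{\alpha+1}$ is a Cartier divisor, $N\cong j_{\alpha+1}^{*}\mathcal{O}_{Z_{\alpha+1}}(E_{\alpha+1}^{\alpha+1})$, and the commutativity of $A^{\bullet}(Z_{\alpha+1})$ allows me to reorder the factors to obtain rule (ii).

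I expect the main obstacle to be precisely this last step: one must verify that the self-intersection contribution is governed by $c_{1}$ of the normal bundle and correctly identify that bundle with the restriction $j_{\alpha+1}^{*}\mathcal{O}_{Z_{\alpha+1}}(E_{\alpha+1}^{\alpha+1})$ of the line bundle associated to the exceptional divisor. Everything else---the ring-homomorphism property of $\pi_{\alpha+1}^{*}$, the projection formula, and the base-change identity $j_{\alpha+1}^{*}\pi_{\alpha+1}^{*}=g_{\alpha+1}^{*}i_{\alpha+1}^{*}$---is then a formal consequence of the commuting square (\ref{BUDiag}) and the standard functoriality recalled above.
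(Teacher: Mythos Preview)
Your argument is correct and is the standard derivation: rule (i) from the ring-homomorphism property of Gysin pullback, rule (iii) from the projection formula together with the base-change identity $j_{\alpha+1}^{*}\pi_{\alpha+1}^{*}=g_{\alpha+1}^{*}i_{\alpha+1}^{*}$, and rule (ii) from the projection formula combined with the self-intersection formula for the divisorial embedding.

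There is nothing to compare against, however, because the paper does not supply its own proof of this proposition. It is stated as a citation to \cite[Example 8.3.9.]{Fulton98} and used as a black box in the proof of Theorem~\ref{ThmCRalpha+1}. Your write-up is precisely the argument Fulton sketches there, so it is entirely appropriate; just be aware that from the paper's point of view this is background material being quoted, not something the author claims to establish.
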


\section{Main results}

By considering sequences of point and rational curve blow-ups with ground $Z_{0}\cong\mathbb{P}^{3}$, we are able to give generators of the Chow ring of the sky $A^{\bullet}(Z_{s})$ as a $\mathbb{Z}-$algebra. To begin with, let us consider the following partition of the centers of the sequence of blow-ups:
\begin{equation*} 
\left\{C_{i}\right\}_{i=1}^{s}=\left\{C_{i}\right\}_{i\in\mathcal{I}_{1}}\sqcup\left\{C_{i}\right\}_{i\in\mathcal{I}_{2}},
\end{equation*}
where $i\in\mathcal{I}_{1}$ if $dim(C_{i})=0$ and $i\in\mathcal{I}_{2}$ otherwise.

\begin{lem}\label{LemGCRPandCBU}
The Chow ring of the sky of the sequence $A^{\bullet}(Z_{s})$ is generated by $\left\{h^{s*},\left\{e_{\alpha}^{s*}\right\}_{\alpha\in\mathcal{I}_{1}},\left\{e_{\alpha}^{s*},w_{\alpha}^{s*}\right\}_{\alpha\in\mathcal{I}_{2}}\right\}$ as a $\mathbb{Z}-$algebra.
\end{lem}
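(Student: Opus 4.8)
The natural approach is induction on the length $s$ of the sequence, using the split exact sequences of Proposition \ref{ProGenCRBU}(v) to pass from $A^{\bullet}(Z_{\alpha})$ to $A^{\bullet}(Z_{\alpha+1})$ at each step. The base case is $\alpha=0$: here $Z_{0}\cong\mathbb{P}^{3}$, so by Theorem \ref{ThmPn} the ring $A^{\bullet}(Z_{0})$ is generated by $h$ as a $\mathbb{Z}$-algebra, which matches the claimed generating set (there are no exceptional classes yet). The inductive step is where the work lies: assuming $A^{\bullet}(Z_{\alpha})$ is generated by the classes listed for the truncated sequence, I would show that $A^{\bullet}(Z_{\alpha+1})$ is generated by the pullbacks of those classes together with the new exceptional class $e_{\alpha+1}^{\alpha+1}$ (and, when $C_{\alpha+1}$ is a curve, the fiber class $w_{\alpha+1}$).

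First I would read off from Proposition \ref{ProGenCRBU}(v) that the map $m(\widetilde{x},y)=j_{\alpha+1\,*}\widetilde{x}+\pi_{\alpha+1}^{*}y$ is surjective, so every class in $A^{\bullet}(Z_{\alpha+1})$ is a sum of a pushed-forward class $j_{\alpha+1\,*}\widetilde{x}$ from the exceptional divisor and a pulled-back class $\pi_{\alpha+1}^{*}y$ from $Z_{\alpha}$. By the inductive hypothesis, $\pi_{\alpha+1}^{*}y$ is a polynomial in the pullbacks $h^{\alpha+1\,*}$ and $e_{\beta}^{\alpha+1\,*}$, $w_{\beta}^{\alpha+1\,*}$ for $\beta\leq\alpha$, since pullback is a ring homomorphism and commutes with the compatible total transforms. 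For the exceptional part, I would invoke the description of $A^{\bullet}(E_{\alpha+1}^{\alpha+1})$ recalled after Theorem \ref{ThmCRPB}: it is generated as a $\mathbb{Z}$-algebra by $\varsigma_{\alpha+1}$ in the point case, and by $\varsigma_{\alpha+1}$ together with the fiber class $w_{\alpha+1}$ in the curve case. Using the projection formula together with the multiplication rule Proposition \ref{ProMRBU}(ii)--(iii), every $j_{\alpha+1\,*}\widetilde{x}$ can be rewritten as a product of the new generators and pullbacks, so it too lies in the subalgebra generated by the claimed classes. Finally, total transforms rather than strict transforms of all earlier exceptional divisors generate the same subalgebra by the relation $e_{i}^{k*}=e_{i}^{k}+\sum_{j>i}p_{ij}e_{j}^{k*}$, which lets me trade strict transforms for total transforms inductively.

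The main obstacle I anticipate is the bookkeeping required to express the distinguished class $\varsigma_{\alpha+1}=c_{1}(\mathcal{O}_{E_{\alpha+1}^{\alpha+1}}(1))$ and its pushforwards in terms of the total-transform class $e_{\alpha+1}^{\alpha+1}$, and to verify that the Hirzebruch-surface generators behave correctly under $j_{\alpha+1\,*}$ via Proposition \ref{ProMRBU}. Concretely, one must track how $c_{1}(j_{\alpha+1}^{*}\mathcal{O}_{Z_{\alpha+1}}(E_{\alpha+1}^{\alpha+1}))$ relates to $\varsigma_{\alpha+1}$ (it is $-\varsigma_{\alpha+1}$ up to the normal bundle contribution) so that self-intersections of the exceptional divisor stay inside the generated subalgebra. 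Since the lemma asserts only that these classes \emph{generate} the ring, and not the precise relations among them, I expect this step to be a matter of careful but routine application of the projection formula and the recalled structure theorems, rather than a conceptual difficulty; the relations themselves are presumably the content of a subsequent theorem.
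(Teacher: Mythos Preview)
Your proposal is correct and follows essentially the same approach as the paper: induction on the length of the sequence, with the base case handled by Theorem~\ref{ThmPn} and the inductive step by combining the surjectivity in Proposition~\ref{ProGenCRBU}(v), the explicit Chow ring of the exceptional divisor (via Theorem~\ref{ThmPn} or Theorem~\ref{ThmCRPB}), and the identity $e_{\alpha+1}^{\alpha+1*}\cdot e_{\alpha+1}^{\alpha+1*}=-j_{\alpha+1*}(\varsigma_{\alpha+1})$ from Proposition~\ref{ProMRBU}. The paper's proof is terser and does not separately discuss the strict-versus-total transform bookkeeping you flag, but otherwise the arguments coincide.
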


\begin{proof}
The result follows by induction on $\alpha$. It is clear that $A^{\bullet}(Z_{0})$ is generated by $\left\{h\right\}$. Let us suppose that $A^{\bullet}(Z_{\alpha})$ is generated by 
\begin{equation*}
\left\{h^{\alpha*},\left\{e_{i}^{\alpha*}\right\}_{\substack{i\in\mathcal{I}_{1} \\ i\leq\alpha}},\left\{e_{i}^{\alpha*},w_{i}^{\beta*}\right\}_{\substack{i\in\mathcal{I}_{2} \\ i\leq\alpha}}\right\}.
\end{equation*}
Now we have to consider the two following settings: either $dim(C_{\alpha+1})=0$ or $dim(C_{\alpha+1})=1$. In the former case, since $E_{\alpha+1}^{\alpha+1}\cong\mathbb{P}^{2}$, that is $A^{\bullet}(E_{\alpha+1}^{\alpha+1})\cong\mathbb{Z}\left[s\right]/(s^{3})$, and $e_{\alpha+1}^{\alpha+1*}\cdot e_{\alpha+1}^{\alpha+1*}=-j_{\alpha+1*}(\varsigma_{\alpha+1})$ by Proposition \ref{ProMRBU}, then by Proposition \ref{ProGenCRBU} and Theorem \ref{ThmPn} we have that $A^{\bullet}(Z_{\alpha+1})$ is generated by
\begin{equation*} 
\left\{h^{\alpha+1*},\left\{e_{i}^{\alpha+1*}\right\}_{\substack{i\in\mathcal{I}_{1} \\ i\leq\alpha+1}},\left\{e_{i}^{\alpha+1*},w_{i}^{\alpha+1*}\right\}_{\substack{i\in\mathcal{I}_{2} \\ i\leq\alpha}}\right\}
\end{equation*}
 as a $\mathbb{Z}-$algebra. In the latter case, that is $dim(C_{\alpha+1})=1$, we have that $E_{\alpha+1}^{\alpha+1}\cong\mathbb{F}_{\delta}$, that is $A^{\bullet}(E_{\alpha+1}^{\alpha+1})\cong\mathbb{Z}\left[t,u\right]/(t^{2}+c_{1}(N_{\mathcal{C}_{\alpha}/Z_{\alpha-1}})t\cdot u, u^{2})$, and $e_{\alpha+1}^{\alpha+1*}\cdot e_{\alpha+1}^{\alpha+1*}=-j_{\alpha+1*}(\varsigma_{\alpha+1})$ by Proposition \ref{ProMRBU}. Then as a consequence of Proposition \ref{ProGenCRBU} and Theorem \ref{ThmCRPB} we have that $A^{\bullet}(Z_{\alpha+1})$ is generated by 
\begin{equation*}
\left\{h^{\alpha+1*},\left\{e_{i}^{\alpha+1*}\right\}_{\substack{i\in\mathcal{I}_{1} \\ i\leq\alpha}},\left\{e_{i}^{\alpha+1*},w_{i}^{\alpha+1*}\right\}_{\substack{i\in\mathcal{I}_{2} \\ i\leq\alpha+1}}\right\}.
\end{equation*}
\end{proof}

Now, in order to compute the relations between the generators, let us restrict firstly to the blow-up at the $\alpha+1-$level, that is $\pi_{\alpha+1}: Z_{\alpha+1}\rightarrow Z_{\alpha}$, where $C_{\alpha+1}$ is a rational curve, and $Z_{\alpha}$ is the sky of a sequence of $\#\mathcal{I}_{1}^{\alpha}$ point blow-ups and $\#\mathcal{I}_{2}^{\alpha}$ rational curve blow-ups. \\

From Proposition \ref{ProGenCRBU}  we know that $A^{\bullet}(Z_{\alpha+1})$ is generated by $\pi_{\alpha+1}^{*} A^{\bullet}(Z_{\alpha})$ and $j_{\alpha+1 *} A^{\bullet} E_{\alpha+1}^{\alpha+1}$. We can thus define a ring homomorphism
\begin{equation*}
f_{\alpha+1}: A^{\bullet}(Z_{\alpha})\left[e_{\alpha+1}^{\alpha+1},w_{\alpha+1}^{\alpha+1}\right]\rightarrow A^{\bullet}(Z_{\alpha+1}),
\end{equation*}
such that,
\begin{numcases}{f_{\alpha+1}(x)=}
\pi_{\alpha+1}^{*}(x) & if\enspace $x\in A^{\bullet}(Z_{\alpha})$, \\
j_{\alpha+1 *}(1) &  if\enspace $x=e_{\alpha+1}^{\alpha+1}$, \\
j_{\alpha+1*}(g_{\alpha+1}^{*}(P)) & if\enspace $x=w_{\alpha+1}^{\alpha+1}$,
\end{numcases}
where the class of $P$, $\left[P\right]\in A^{1}(C_{\alpha+1})$ is a generator of $A^{1}(C_{\alpha+1})$. Consequently, we have that
\begin{equation*}
A^{\bullet}(Z_{\alpha+1})\cong A^{\bullet}(Z_{\alpha})\left[e_{\alpha+1}^{\alpha+1},w_{\alpha+1}^{\alpha+1}\right]/ker f_{\alpha+1}.
\end{equation*}

\begin{thm}\label{ThmCRalpha+1}
The Chow ring of $Z_{\alpha+1}$, $A^{\bullet}(Z_{\alpha+1})$, is isomorphic to
\begin{equation*}
A^{\bullet}(Z_{\alpha+1})\cong\frac{A^{\bullet}(Z_{\alpha})\left[e_{\alpha+1}^{\alpha+1},w_{\alpha+1}^{\alpha+1}\right]}{\mathcal{J}_{\alpha+1}},
\end{equation*}
where
\begin{multline*}
\mathcal{J}_{\alpha+1}=\left(ker\: i_{\alpha+1}^{*}\cdot e_{\alpha+1}^{\alpha+1},h^{\alpha+1*}\cdot e_{\alpha+1}^{\alpha+1}-\mu_{0}w_{\alpha+1}^{\alpha+1},\left\{e_{\beta}^{\alpha*}\cdot e_{\alpha+1}^{\alpha+1}-\mu_{\beta}w_{\alpha+1}^{\alpha+1}\right\},(w_{\alpha+1}^{\alpha+1})^{2},\right. \\ h^{\alpha+1*}\cdot w_{\alpha+1}^{\alpha+1}, \left\{e_{\beta}^{\alpha*}\cdot w_{\alpha+1}^{\alpha+1}\right\}_{\beta=1}^{\alpha}, (e_{\alpha+1}^{\alpha+1})^{2}-c_{1}(N_{C_{\alpha+1}/Z_{\alpha}})w_{\alpha+1}^{\alpha+1}+\left[C_{\alpha+1}\right], \\ \left. e_{\alpha+1}^{\alpha+1}\cdot w_{\alpha+1}^{\alpha+1}+(h^{\alpha+1*})^{3}\right),
\end{multline*}
with $\mu_{\beta}=e_{\beta}^{\alpha*}\cdot \left[C_{\alpha+1}\right]$.
\end{thm}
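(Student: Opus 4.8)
The plan is to establish the isomorphism by showing that $\ker f_{\alpha+1}$ equals the ideal $\mathcal{J}_{\alpha+1}$. Since we already know from the discussion preceding the statement that $A^{\bullet}(Z_{\alpha+1})\cong A^{\bullet}(Z_{\alpha})[e_{\alpha+1}^{\alpha+1},w_{\alpha+1}^{\alpha+1}]/\ker f_{\alpha+1}$, it suffices to prove the two inclusions $\mathcal{J}_{\alpha+1}\subseteq\ker f_{\alpha+1}$ and $\ker f_{\alpha+1}\subseteq\mathcal{J}_{\alpha+1}$. The first inclusion is the computational heart; the second is a dimension/rank count via the split exact sequence of Proposition \ref{ProGenCRBU}(v).

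**The inclusion $\mathcal{J}_{\alpha+1}\subseteq\ker f_{\alpha+1}$.** First I would verify that each listed generator maps to zero under $f_{\alpha+1}$, using the multiplication rules of Proposition \ref{ProMRBU} together with the projection formula and the Key Formula of Proposition \ref{ProGenCRBU}(i). For the generators of the form $x\cdot e_{\alpha+1}^{\alpha+1}$ with $x\in\ker i_{\alpha+1}^{*}$, rule (iii) gives $f_{\alpha+1}(x\cdot e_{\alpha+1}^{\alpha+1})=\pi_{\alpha+1}^{*}(x)\cdot j_{\alpha+1*}(1)=j_{\alpha+1*}(g_{\alpha+1}^{*}i_{\alpha+1}^{*}x)=0$, which handles the first block at once. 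For the mixed relations $h^{\alpha+1*}\cdot e_{\alpha+1}^{\alpha+1}-\mu_{0}w_{\alpha+1}^{\alpha+1}$ and $e_{\beta}^{\alpha*}\cdot e_{\alpha+1}^{\alpha+1}-\mu_{\beta}w_{\alpha+1}^{\alpha+1}$, rule (iii) again reduces $f_{\alpha+1}$ of the leading term to $j_{\alpha+1*}(g_{\alpha+1}^{*}i_{\alpha+1}^{*}h)$ (resp.\ $g_{\alpha+1}^{*}i_{\alpha+1}^{*}e_\beta$), and since $C_{\alpha+1}\cong\mathbb{P}^1$ the restricted class $i_{\alpha+1}^{*}h$ (resp.\ $i_{\alpha+1}^{*}e_\beta^{\alpha}$) is the intersection number $\mu_0=h\cdot[C_{\alpha+1}]$ (resp.\ $\mu_\beta=e_\beta^{\alpha*}\cdot[C_{\alpha+1}]$) times the point class $[P]$, whose image under $g_{\alpha+1}^{*}$ followed by $j_{\alpha+1*}$ is exactly $f_{\alpha+1}(w_{\alpha+1}^{\alpha+1})$; this matches the subtracted term. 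The relations $(w_{\alpha+1}^{\alpha+1})^{2}$ and the products of $w_{\alpha+1}^{\alpha+1}$ with $h^{\alpha+1*}$ and the $e_\beta^{\alpha*}$ vanish because, by rule (ii) or (iii), they push forward a class on $E_{\alpha+1}^{\alpha+1}\cong\mathbb{F}_\delta$ supported on a fiber intersected with another fiber-class, i.e.\ a cycle of dimension below $0$, hence zero. The two genuinely new relations are the analogues of the defining relations of $A^{\bullet}(E_{\alpha+1}^{\alpha+1})$: the self-intersection $(e_{\alpha+1}^{\alpha+1})^{2}$ is computed by rule (ii) as $j_{\alpha+1*}(c_1(\mathcal{O}(E_{\alpha+1}^{\alpha+1})|_{E})\cdot 1)=j_{\alpha+1*}(-\varsigma_{\alpha+1})$, and expressing $\varsigma_{\alpha+1}=t$ in terms of the $\mathbb{F}_\delta$-generators via $t^2=-c_1(N)\,t\cdot u$ (equivalently $c_1(N_{C_{\alpha+1}/Z_\alpha})$ and the class $[C_{\alpha+1}]$) yields the stated relation after pushing forward; the last relation $e_{\alpha+1}^{\alpha+1}\cdot w_{\alpha+1}^{\alpha+1}+(h^{\alpha+1*})^3$ encodes the top self-intersection $\varsigma_{\alpha+1}\cdot u=[\mathrm{pt}]$ on $E_{\alpha+1}^{\alpha+1}$ against the normalization $(h^{s*})^3=[\mathrm{pt}]$ on $Z_{\alpha+1}$.

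**The reverse inclusion and the main obstacle.** Having shown $\mathcal{J}_{\alpha+1}\subseteq\ker f_{\alpha+1}$, there is an induced surjection $A^{\bullet}(Z_\alpha)[e_{\alpha+1}^{\alpha+1},w_{\alpha+1}^{\alpha+1}]/\mathcal{J}_{\alpha+1}\twoheadrightarrow A^{\bullet}(Z_{\alpha+1})$, and I would close the argument by checking it is injective. The clean way is to produce a $\mathbb{Z}$-module basis for the quotient on the left: the relations in $\mathcal{J}_{\alpha+1}$ let me rewrite every monomial as an $A^{\bullet}(Z_\alpha)$-combination of $1$, $e_{\alpha+1}^{\alpha+1}$, and $w_{\alpha+1}^{\alpha+1}$ (with the mixed relations absorbing $e_{\alpha+1}^{\alpha+1}$ times a pulled-back class into a $w_{\alpha+1}^{\alpha+1}$-term, and the two new relations eliminating higher powers), so the quotient is generated as an $A^{\bullet}(Z_\alpha)$-module by $\{1,e_{\alpha+1}^{\alpha+1},w_{\alpha+1}^{\alpha+1}\}$ modulo the submodule relations $\ker i_{\alpha+1}^{*}\cdot e_{\alpha+1}^{\alpha+1}=0$. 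This matches exactly the module decomposition of $A^{\bullet}(Z_{\alpha+1})$ coming from the split exact sequence $0\to A_k C_{\alpha+1}\to A_k E_{\alpha+1}^{\alpha+1}\oplus A_k Z_\alpha\to A_k Z_{\alpha+1}\to 0$ in Proposition \ref{ProGenCRBU}(v): the image of $A^{\bullet}(Z_\alpha)$ accounts for the $\pi_{\alpha+1}^{*}$ part, while $j_{\alpha+1*}$ of the basis $\{1,\varsigma_{\alpha+1}\}$ of $A^{\bullet}(E_{\alpha+1}^{\alpha+1})$ modulo $\pi^{*}$-classes accounts for the $e$ and $w$ parts, with the kernel $A^{\bullet}(C_{\alpha+1})$ being precisely the relations $\ker i_{\alpha+1}^{*}\cdot e_{\alpha+1}^{\alpha+1}$. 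A rank comparison then forces the surjection to be an isomorphism. \textbf{The hard part} will be the bookkeeping in this final module-basis matching: one must confirm that $\mathcal{J}_{\alpha+1}$ contains \emph{no more} than the relations dictated by the exact sequence—i.e.\ that normalizing via the eight listed generators does not secretly collapse the rank below that of $A^{\bullet}(Z_{\alpha+1})$—and that the mixed relations correctly track the proximity coefficients $\mu_\beta$ so that the pull-back formula $e_i^{k*}=e_i^{k}+\sum_{j>i}p_{ij}e_j^{k*}$ is respected across the induction.
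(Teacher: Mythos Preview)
Your argument for the inclusion $\mathcal{J}_{\alpha+1}\subseteq\ker f_{\alpha+1}$ follows essentially the same lines as the paper: the first six families of relations via Proposition~\ref{ProMRBU}, the $(e_{\alpha+1}^{\alpha+1})^{2}$ relation via the Key Formula (Proposition~\ref{ProGenCRBU}(i)), and the last relation via the birational invariance of $A_{0}$.

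For the reverse inclusion you take a genuinely different route. You propose a rank/module-basis comparison: use the listed relations to reduce every element of $R_{\alpha+1}:=A^{\bullet}(Z_{\alpha})[e_{\alpha+1}^{\alpha+1},w_{\alpha+1}^{\alpha+1}]/\mathcal{J}_{\alpha+1}$ to an $A^{\bullet}(Z_{\alpha})$-combination of $1,e_{\alpha+1}^{\alpha+1},w_{\alpha+1}^{\alpha+1}$, and then match the resulting rank against that of $A^{\bullet}(Z_{\alpha+1})$ as read off from the split exact sequence of Proposition~\ref{ProGenCRBU}(v). The paper instead constructs an explicit group homomorphism $\gamma:A^{\bullet}(E_{\alpha+1}^{\alpha+1})\oplus A^{\bullet}(Z_{\alpha})\to R_{\alpha+1}$ lifting $m$, and verifies $\gamma\circ l=0$ on the two generators $[C_{\alpha+1}]\in A^{0}(C_{\alpha+1})$ and $[P_{\alpha+1}]\in A^{1}(C_{\alpha+1})$; since $A^{\bullet}(Z_{\alpha+1})=\mathrm{coker}(l)$, this produces a map $A^{\bullet}(Z_{\alpha+1})\to R_{\alpha+1}$ inverse to the natural surjection $\varphi_{\alpha+1}$. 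The payoff of the paper's approach is that it bypasses exactly the ``hard part'' you flag at the end: rather than checking that the listed relations do not over-collapse the quotient, the two one-line verifications $\gamma(l([C_{\alpha+1}]))=0$ and $\gamma(l([P_{\alpha+1}]))=0$ reduce immediately to the two ``new'' generators of $\mathcal{J}_{\alpha+1}$. Your approach is correct in principle, but the bookkeeping you anticipate is real and noticeably heavier than the paper's two-line check.
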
 

\begin{proof}
In order to compute the relations between the generators of $A^{\bullet}(Z_{\alpha+1})$, let us now consider the ring homomorphism induced by the inclusion $i_{\alpha+1}: C_{\alpha+1}\rightarrow Z_{\alpha}$, that is
\begin{equation*}
i_{\alpha+1}^{*}: A^{\bullet}(Z_{\alpha})\rightarrow A^{\bullet}(C_{\alpha+1}),
\end{equation*}
and let us denote by $\mathcal{I}_{\alpha+1}$ to
\begin{multline*}
\mathcal{I}_{\alpha+1}:=\left(\left\{a_{0}h^{\alpha*}+\sum_{\beta=1}^{\alpha}a_{\beta}e_{\beta}^{\alpha*}\right\},(h^{\alpha*})^{2},\left\{h^{\alpha*}\cdot e_{\beta}^{\alpha*}\right\}_{\beta=1}^{\alpha}, \right. \\ \left. \left\{e_{\beta}^{\alpha*}\cdot e_{\delta}^{\alpha*}\right\}_{\substack{\beta,\delta<\alpha+1 \\ \beta\neq\delta}},\left\{w_{\beta}^{\alpha*}\right\}_{\substack{\beta\in\mathcal{I}_{2} \\ \beta<\alpha+1}}\right),
\end{multline*}
where $\left\{a_{0}\mu_{0}+\sum_{\beta=1}^{\alpha}a_{\beta}\mu_{\beta}\right\}$ denotes the minimum set of relations of the finitely generated free abelian group $\mathcal{S}_{\alpha+1}$ generated by 
\begin{equation*}
\left\{\mu_{0}=deg(i_{\alpha+1}^{*}h^{\alpha*}), \left\{\mu_{i}=deg(i_{\alpha+1}^{*}e_{\beta}^{\alpha*})\right\}_{\beta=1}^{\alpha}\right\}.
\end{equation*}
Then, it can be proved that $Ker(i_{\alpha+1}^{*})=\mathcal{I}_{\alpha+1}$. Firstly, we will prove that $\mathcal{I}_{\alpha+1}\subset Ker(i_{\alpha+1}^{*})$. Since $C_{\alpha+1}$ is a rational curve, then $A^{1}(C_{\alpha+1})$ is generated by the class $\left[P\right]\in A^{1}(C_{\alpha+1})$ so we can conclude that $\left\{a_{0}m_{0}+\sum_{\beta=1}^{\alpha}a_{\beta}m_{\beta}\right\}\subset Ker(i_{\alpha+1}^{*})$. Moreover, we know that the pull-back morphism $i_{\alpha+1}^{*}: A^{\bullet}(Z_{\alpha})\rightarrow A^{\bullet}(C_{\alpha+1})$ is graded on codimension, so it follows that 
\begin{equation*}
\left((h^{\alpha*})^{2},\left\{h^{\alpha*}\cdot e_{\beta}^{\alpha*}\right\}_{\beta=1}^{\alpha},\left\{e_{\beta}^{\alpha*}\cdot e_{\delta}^{\alpha*}\right\}_{\substack{\beta,\delta<\alpha+1 \\ \beta\neq\delta}},\left\{w_{\beta}^{\alpha*}\right\}_{\substack{\beta\in\mathcal{I}_{2} \\ \beta<\alpha+1}}\right)\subset Ker(i_{\alpha+1}^{*}).
\end{equation*}
Now, we will prove that $Ker(i_{\alpha+1}^{*})\subset \mathcal{I}_{\alpha+1}$. Note that $i_{\alpha+1}^{*}: A^{\bullet}(Z_{\alpha})\rightarrow A^{\bullet}(C_{\alpha+1})$ is homogenous, so $ker(i_{\alpha+1}^{*})$ is an homogenous ideal, and $\mathcal{I}_{\alpha+1}$ is an homogenous ideal too by construction. Let us suppose that 
\begin{equation*}
Q\left[h^{\alpha*},e_{1}^{\alpha*},...,e_{\alpha}^{\alpha*},w_{1}^{\alpha*},...,w_{\alpha}^{\alpha*}\right]\in Ker(i_{\alpha+1}^{*})/\mathcal{I}_{\alpha+1},
\end{equation*}
with $deg(Q)=\eta$. Then $\eta\leq 1$, since all polynomials of weighted degree $2$ are all in $\mathcal{I}_{\alpha+1}$, and $Q\left[h^{\alpha*},e_{1}^{\alpha*},...,e_{\alpha}^{\alpha*},w_{1}^{\alpha*},...,w_{\alpha}^{\alpha*}\right]$ must be of the form $Q\left[h^{\alpha*},e_{1}^{\alpha*},...,e_{\alpha}^{\alpha*},w_{1}^{\alpha*},...,w_{\alpha}^{\alpha*}\right]=b_{0}h^{\alpha*}+\sum_{i=1}^{\alpha}b_{i}e_{i}^{\alpha*}mod(\mathcal{I}_{\alpha+1})$. But then $b_{i}=0$ for $i=0,...,\alpha$ since $\left\{a_{0}\mu_{0}+\sum_{\beta=1}^{\alpha}a_{\beta}\mu_{\beta}\right\}$ is the minimum set of relations of the finitely generated free abelian group $\mathcal{S}_{\alpha+1}$. \\
%%%%%%%%%%%%%%%%%%%%%%%%%%%%%%%%%%%%%%%%%%%%%
Before going on, we should distinguish between two possible cases, that is:
\begin{enumerate}
\item either $i_{\alpha+1}^{*}$ is surjective, \label{CaseSur}
\item or $i_{\alpha+1}^{*}$ is not surjective. \label{CaseNSur}
\end{enumerate}
In Case \ref{CaseSur} we have that $A^{\bullet}(C_{\alpha+1})\cong A^{\bullet}(Z_{\alpha})/ker i_{\alpha+1}^{*}$. Moreover, there must exist a relation of the form
\begin{equation*}
a_{0}i_{\alpha+1}^{*}h^{\alpha*}+\sum a_{\beta}i_{\alpha+1}^{*}e_{\beta}^{\alpha*}=\left[P\right],
\end{equation*}
where $\left[P\right]\in A^{1}(C_{\alpha+1})$ is a generator of $A^{1}(C_{\alpha+1})$, so in this case we can conclude that:
\begin{equation*}
A^{\bullet}(C_{\alpha+1})\cong A^{\bullet}(Z_{\alpha})\left[P\right]/(ker i_{\alpha+1}^{*}, a_{0}h^{\alpha*}+\sum a_{\beta}e_{\beta}^{\alpha*}-[P]).
\end{equation*}
However, Case \ref{CaseNSur} is a bit more tricky. In particular, we have that $A^{\bullet}(C_{\alpha+1})$ is isomorphic to
\begin{equation*}
A^{\bullet}(C_{\alpha+1})\cong A^{\bullet}(Z_{\alpha})\left[P\right]/(ker i_{\alpha+1}^{*},h^{\alpha*}-\mu_{0}\left[P\right],\left\{e_{\beta}^{\alpha*}-\mu_{\beta}\left[P\right]\right\}_{\beta=1}^{\alpha}, (\left[P\right])^{2}).
\end{equation*}
Now, since $E_{\alpha+1}^{\alpha+1}$ is isomorphic to the projective bundle $P(N_{C_{\alpha+1}/Z_{\alpha}})$ over $C_{\alpha+1}$, then by Theorem \ref{ThmCRPB} it follows that:
\begin{equation*}
A^{\bullet}(E_{\alpha+1}^{\alpha+1})\cong A^{\bullet}(C_{\alpha+1})\left[\varsigma_{\alpha+1}\right]/(\varsigma_{\alpha+1}^{2}+c_{1}(N_{C_{\alpha+1}/Z_{\alpha}})\varsigma_{\alpha+1}\cdot p).
\end{equation*}

It can be proved that in both Cases \ref{CaseSur} and \ref{CaseNSur} it is satisfied the following inclusion $\mathcal{J}_{\alpha+1}\subset Ker\: f_{\alpha+1}$. Recall that 
\begin{multline*}
\mathcal{J}_{\alpha+1}=\left(ker\: i_{\alpha+1}^{*}\cdot e_{\alpha+1}^{\alpha+1},h^{\alpha+1*}\cdot e_{\alpha+1}^{\alpha+1}-\mu_{0}w_{\alpha+1}^{\alpha+1},\left\{e_{\beta}^{\alpha*}\cdot e_{\alpha+1}^{\alpha+1}-\mu_{\beta}w_{\alpha+1}^{\alpha+1}\right\},(w_{\alpha+1}^{\alpha+1})^{2},\right. \\
  h^{\alpha+1*}\cdot w_{\alpha+1}^{\alpha+1},\left\{e_{\beta}^{\alpha*}\cdot w_{\alpha+1}^{\alpha+1}\right\}_{\beta=1}^{\alpha}, (e_{\alpha+1}^{\alpha+1})^{2}-c_{1}(N_{C_{\alpha+1}/Z_{\alpha}})w_{\alpha+1}^{\alpha+1}+\left[C_{\alpha+1}\right], \\
\left. e_{\alpha+1}^{\alpha+1}\cdot w_{\alpha+1}^{\alpha+1}+(h^{\alpha+1*})^{3}\right)
\end{multline*}
Firstly, since $f_{\alpha+1}$ is a ring homomorphism then we have that $f_{\alpha+1}(x\cdot y)=f_{\alpha+1}(x)\cdot f_{\alpha+1}(y)$, so the inclusion
\begin{multline*}
\left(ker\: i_{\alpha+1}^{*}\cdot e_{\alpha+1}^{\alpha+1},h^{\alpha+1*}\cdot e_{\alpha+1}^{\alpha+1}-\mu_{0}w_{\alpha+1}^{\alpha+1},\left\{e_{\beta}^{\alpha*}\cdot e_{\alpha+1}^{\alpha+1}-\mu_{\beta}w_{\alpha+1}^{\alpha+1}\right\},(w_{\alpha+1}^{\alpha+1})^{2}, \right. \\
 \left. h^{\alpha+1*}\cdot w_{\alpha+1}^{\alpha+1},\left\{e_{\beta}^{\alpha*}\cdot w_{\alpha+1}^{\alpha+1}\right\}_{\beta=1}^{\alpha}\right)\subset ker f_{\alpha+1},
\end{multline*}
follows directly from Proposition \ref{ProMRBU}. Moreover, the inclusion 
\begin{equation*}
\left((e_{\alpha+1}^{\alpha+1})^{2}-c_{1}(N_{C_{\alpha+1}/Z_{\alpha}})w_{\alpha+1}^{\alpha+1}+\left[C_{\alpha+1}\right]\right)\subset ker f_{\alpha+1},
\end{equation*}
is a direct consequence of the key formula (see Proposition \ref{ProGenCRBU}). Finally, the inclusion $(e_{\alpha+1}^{\alpha+1}\cdot w_{\alpha+1}^{\alpha+1}+(h^{\alpha+1*})^{3})\subset ker f_{\alpha+1}$ follows from the key formula and the birational invariance of $A_{0}(Z_{i})$ (see \cite[Example 16.1.11]{Fulton98}).

In order to continue with the proof, let us recall that by Proposition \ref{ProGenCRBU} we have the following exact sequence:
\begin{equation*}
0\xrightarrow{} A^{\bullet}(C_{\alpha+1})\xrightarrow{l} A^{\bullet}(E_{\alpha+1}^{\alpha+1})\oplus A^{\bullet}(Z_{\alpha})\xrightarrow{m} A^{\bullet}(Z_{\alpha+1})\xrightarrow{} 0
\end{equation*}
where $l(x)=((g_{\alpha+1}^{*}c_{1}(N_{C_{\alpha+1}/Z_{\alpha}})+\varsigma_{\alpha+1})\cap g_{\alpha+1}^{*}(x), i_{\alpha+1 *}(x))$ and $m(y)=(-j_{\alpha+1 *}(y), \pi_{\alpha+1}^{*}(y))$.
Let us now define
\begin{equation*}
R_{\alpha+1}:=A^{\bullet}(Z_{\alpha})\left[e_{\alpha+1}^{\alpha+1},w_{\alpha+1}^{\alpha+1}\right]/\mathcal{J}_{\alpha+1},
\end{equation*}
and a group homorphism $\gamma: A^{\bullet}(E_{\alpha+1}^{\alpha+1})\oplus A(Z_{\alpha})\rightarrow R_{\alpha+1}$ such that $\gamma(x,y)=-h_{\alpha+1}(x)+\pi_{\alpha+1}^{*}(y)$, where
$$
h_{\alpha+1}(x)=\begin{cases}
(e_{\alpha+1}^{\alpha+1})^{\eta+1} & \text{if}\enspace x=(-1)^{\eta}(\varsigma_{\alpha+1})^{\eta}\enspace\text{for}\enspace \eta\geq 1, \\
w_{\alpha+1}^{\alpha+1} & \text{if}\enspace x=p, \\
(d_{0}h^{\alpha*}+\sum_{\beta=1}^{\alpha}d_{\beta}e_{\beta}^{\alpha*})^{\lambda-1}\cdot w_{\alpha+1}^{\alpha+1} & \text{if}\enspace x=(p)^{\lambda}\enspace\text{for}\enspace \lambda\geq 2, \\
h_{\alpha+1}((p)^{\lambda})\cdot(e_{\alpha+1}^{\alpha+1})^{\eta}  & \text{if}\enspace x=(p)^{\lambda}\cdot(\varsigma_{\alpha+1})^{\eta}\enspace\text{for}\enspace \lambda\geq 2, \eta\geq 1,                                                    
\end{cases} $$,
giving factorization of $m: A^{\bullet}(E_{\alpha+1}^{\alpha+1})\oplus A^{\bullet}(Z_{\alpha})\rightarrow A^{\bullet}(Z_{\alpha+1})$, that is,
\begin{equation*}
\xymatrix{ A^{\bullet}(E_{\alpha+1}^{\alpha+1})\oplus A^{\bullet}(Z_{\alpha})\ar[r]^{m}\ar[d]_{\gamma} & A^{\bullet}(Z_{\alpha+1})\ar[d] \\
R_{\alpha+1}\ar[r]^(.3){\varphi_{\alpha+1}} & A^{\bullet}(Z_{\alpha})\left[e_{\alpha+1}^{\alpha+1},w_{\alpha+1}^{\alpha+1}\right]/ker f_{\alpha+1}\ar[u]}
\end{equation*}

In order to prove that $R_{\alpha+1}\cong A^{\bullet}(Z_{\alpha+1})$, that is $\varphi_{\alpha+1}$ is an isomorphism, it suffices to verify that $\gamma\circ l=0$. Choose $\left[C_{\alpha+1}\right]\in A^{0}(C_{\alpha+1})$. Then $l(\left[C_{\alpha+1}\right])=(\varsigma_{\alpha}+c_{1}(N_{C_{\alpha}/Z_{\alpha-1}})p, \left[C_{\alpha+1}\right])$, and
\begin{equation*} 
\gamma(l(\left[C_{\alpha+1}\right]))=(e_{\alpha+1}^{\alpha+1})^{2}-c_{1}(N_{C_{\alpha+1}/Z_{\alpha}})w_{\alpha}+\left[C_{\alpha+1}\right]=0.
\end{equation*}
Choose now $\left[P_{\alpha+1}\right]\in A^{1}(C_{\alpha+1})$. Then $l(\left[P_{\alpha}\right])=(\varsigma_{\alpha+1}\cdot p, (h^{\alpha*})^{3})$ and
\begin{equation*}
\gamma(l(\left[P_{\alpha+1}\right]))=e_{\alpha+1}^{\alpha+1}\cdot w_{\alpha+1}^{\alpha+1}+(h^{\alpha*})^{3}=0.
\end{equation*}
\end{proof}

\begin{cor}\label{CorChowRingSeqRCPBU}
The Chow ring of the sky $A^{\bullet}(Z_{s})$ is isomorphic to
\begin{equation*}
A^{\bullet}(Z_{s})\cong\frac{\mathbb{Z}\left[h^{s*}, \left\{e_{\alpha}^{s*}\right\}_{\alpha\in\mathcal{I}_{1}}, \left\{e_{\beta}^{s*}, w_{\beta}^{s*}\right\}_{\beta\in\mathcal{I}_{2}}\right]}{\mathcal{A}},
\end{equation*}
where
\begin{multline*}
\mathcal{A}=\left((h^{s*})^{4},\left\{\left\{h^{s*}\cdot e_{\alpha}^{s*}\right\}, \left\{e_{\alpha}^{s*}\cdot e_{\beta}^{s*}\right\}_{\alpha\neq\beta},\left\{-(e_{\alpha}^{s*})^{3}+(h^{s*})^{n}\right\}\right\}_{\alpha,\beta\in\mathcal{I}_{1}}, \right. \\
\bigl\{ ker i_{\alpha}^{s*}\cdot e_{\alpha}^{s*},h^{s*}\cdot e_{\alpha}^{s*}-\mu_{0}w_{\alpha}^{s*},\left\{e_{\beta}^{s*}\cdot e_{\alpha}^{s*}-\mu_{\beta}w_{\alpha}^{s*}\right\}_{\beta<\alpha},(w_{\alpha}^{s*})^{2}, h^{s*}\cdot w_{\alpha}^{s*}, \\
\left\{e_{\beta}^{s*}\cdot w_{\alpha}^{s*}\right\}_{\beta<\alpha},\left. (e_{\alpha}^{s*})^{2}-c_{1}(N_{C_{\alpha}/Z_{\alpha-1}})w_{\alpha}^{s*}+\left[C_{\alpha}\right]^{s*}, e_{\alpha}^{s*}\cdot w_{\alpha}^{s*}+(h^{s*})^{3}\bigr\}_{\alpha,\beta\in\mathcal{I}_{2}}\right).
\end{multline*}
\end{cor}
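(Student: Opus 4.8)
The plan is to establish the presentation by induction on the length of the sequence, running Theorem~\ref{ThmCRalpha+1} (together with its point-blow-up counterpart) as the inductive engine and using Lemma~\ref{LemGCRPandCBU} to control the generating set at each stage. The base case is the ground variety $Z_0\cong\mathbb{P}^3$: by Theorem~\ref{ThmPn} we have $A^{\bullet}(Z_0)\cong\mathbb{Z}[h]/(h^4)$, which is precisely $\mathcal{A}$ in the degenerate situation $\mathcal{I}_1=\mathcal{I}_2=\varnothing$, where the only surviving relation is $(h^{s*})^4$.

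For the inductive step I would assume that the asserted presentation holds for $A^{\bullet}(Z_\alpha)$ and analyse the single blow-up $\pi_{\alpha+1}\colon Z_{\alpha+1}\to Z_\alpha$, distinguishing $\dim C_{\alpha+1}=1$ from $\dim C_{\alpha+1}=0$. When $C_{\alpha+1}$ is a rational curve, Theorem~\ref{ThmCRalpha+1} supplies the relative presentation $A^{\bullet}(Z_{\alpha+1})\cong A^{\bullet}(Z_\alpha)[e_{\alpha+1}^{\alpha+1},w_{\alpha+1}^{\alpha+1}]/\mathcal{J}_{\alpha+1}$, whose ideal $\mathcal{J}_{\alpha+1}$ is exactly the $\mathcal{I}_2$-block of $\mathcal{A}$ attached to the index $\alpha+1$. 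When $C_{\alpha+1}$ is a point, $E_{\alpha+1}^{\alpha+1}\cong\mathbb{P}^2$ and the analogous single-step presentation proved for sequences of point blow-ups in \cite{Camazon24} contributes the $\mathcal{I}_1$-block $\{h^{s*}\cdot e_{\alpha+1}^{s*},\,e_{\alpha+1}^{s*}\cdot e_\beta^{s*},\,-(e_{\alpha+1}^{s*})^3+(h^{s*})^n\}$. Substituting the inductive presentation of $A^{\bullet}(Z_\alpha)$ into the relative one turns $A^{\bullet}(Z_{\alpha+1})$ into a quotient of a polynomial ring over $\mathbb{Z}$.

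The decisive ingredient is the compatibility of the total-transform symbols with pull-back. By functoriality of $\pi_{\alpha+1}^{*}$ and the very definition of the total transform, each old generator is sent to its level-$(\alpha+1)$ total transform, namely $\pi_{\alpha+1}^{*}h^{\alpha*}=h^{(\alpha+1)*}$, $\pi_{\alpha+1}^{*}e_i^{\alpha*}=e_i^{(\alpha+1)*}$ and $\pi_{\alpha+1}^{*}w_i^{\alpha*}=w_i^{(\alpha+1)*}$, while the newly created class coincides with its own total transform, $e_{\alpha+1}^{(\alpha+1)*}=e_{\alpha+1}^{\alpha+1}$ (and similarly for $w$), because no strict transform has yet been removed. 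Hence the inductive relation ideal is carried over verbatim, every superscript $\alpha*$ being promoted to $(\alpha+1)*$, whereas the coefficients $\mu_\beta=e_\beta^{\alpha*}\cdot[C_{\alpha+1}]$ and $c_1(N_{C_{\alpha+1}/Z_\alpha})$ are intrinsic intersection numbers frozen at the moment of blowing up and are therefore unchanged. Thus the old ideal and the new block $\mathcal{J}_{\alpha+1}$ (resp. its point analogue) fit together to yield exactly $\mathcal{A}$ at level $\alpha+1$.

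The principal obstacle I foresee is bookkeeping rather than conceptual. One must verify that relations introduced at early stages and then pulled back repeatedly do not pick up spurious correction terms, and in particular that the symbol $\ker i_\alpha^{s*}$ occurring in $\mathcal{A}$ is literally the image under $\pi_{s,\alpha-1}^{*}$ of the kernel $\ker i_\alpha^{*}$ computed at the blowing-up stage. This again rests on functoriality combined with the split exact sequence of Proposition~\ref{ProGenCRBU}, which guarantees that $\pi_{\alpha+1}^{*}$ is injective and that the only new relations among the generators are those recorded in $\mathcal{J}_{\alpha+1}$. Once this compatibility is secured, iterating the step from $\alpha=0$ up to $\alpha=s-1$ assembles the complete ideal $\mathcal{A}$ and finishes the proof.
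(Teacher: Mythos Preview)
Your proposal is correct and follows essentially the same approach as the paper: induction on the length of the sequence, with base case $A^{\bullet}(Z_0)\cong\mathbb{Z}[h]/(h^4)$ from Theorem~\ref{ThmPn} and inductive step given by Theorem~\ref{ThmCRalpha+1} for curve centers and by \cite[Theorem~3.3]{Camazon24} for point centers. Your write-up is in fact more explicit than the paper's (which dispatches the induction in one sentence), particularly in spelling out why the total-transform symbols are compatible with $\pi_{\alpha+1}^{*}$ and why the numerical coefficients $\mu_\beta$ and $c_1(N_{C_{\alpha+1}/Z_\alpha})$ survive unchanged.
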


\begin{proof}
By \cite[Theorem 3.3.]{Camazon24} and Theorem \ref{ThmCRalpha+1} we know that
\begin{enumerate}
\item if $dim(C_{\alpha+1})=0$, that is $C_{\alpha+1}=P_{\alpha+1}$, then
\begin{equation*}
A^{\bullet}(Z_{\alpha+1})\cong\frac{A^{\bullet}(Z_{\alpha})\left[e_{\alpha+1}^{\alpha+1*}\right]}{(h^{\alpha+1*}\cdot e_{\alpha+1}^{\alpha+1*},\left\{e_{i}^{\alpha+1*}\cdot e_{\alpha+1}^{\alpha+1*}\right\}_{i=1}^{\alpha}, -(e_{\alpha+1}^{\alpha+1*})^{3}+(h^{\alpha+1*})^{3})},
\end{equation*}
\item and if $dim(C_{\alpha+1})=1$, that is $C_{\alpha+1}=\mathcal{C}_{\alpha+1}$, then
\begin{equation*}
A^{\bullet}(Z_{\alpha+1})\cong\frac{A^{\bullet}(Z_{\alpha})\left[e_{\alpha+1}^{\alpha+1*},w_{\alpha+1}^{\alpha+1*}\right]}{\mathcal{J}_{\alpha+1}}.
\end{equation*}
\end{enumerate}
So, since $A^{\bullet}(Z_{0})\cong\frac{\mathbb{Z}\left[h\right]}{(h)^{4}}$ by Theorem \ref{ThmPn}, the result follows directly by induction.
\end{proof}

We would like to point out that there exists substantial differences when considering sequences of blow-ups at higher dimensional centers instead of just point blow-ups.
\begin{thm}\cite[Theorem 4.]{EisenbudVandeVen81}
Given any integer $\gamma\geq 4$, there exist smooth rational curves $C$ of degree $\gamma$ in $\mathbb{P}^{3}$ with normal bundle isomorphic to $\mathcal{O}_{C}(2\gamma-1-a)\oplus\mathcal{O}_{C}(2\gamma-1+a)$ if and only if $\left|a\right|\leq \gamma-4$.
\end{thm}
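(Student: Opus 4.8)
The first step is to record the numerical constraint. Since $C\cong\mathbb{P}^{1}$, Grothendieck's splitting theorem writes $N_{C/\mathbb{P}^{3}}\cong\mathcal{O}_{C}(a_{1})\oplus\mathcal{O}_{C}(a_{2})$ with $a_{1}\leq a_{2}$, and from the normal bundle sequence $0\to T_{C}\to T_{\mathbb{P}^{3}}|_{C}\to N_{C/\mathbb{P}^{3}}\to 0$ one gets $\deg N_{C/\mathbb{P}^{3}}=c_{1}(T_{\mathbb{P}^{3}})\cdot C-\deg T_{C}=4\gamma-2$. Hence $a_{1}+a_{2}=2(2\gamma-1)$, and writing $a_{1}=2\gamma-1-a$, $a_{2}=2\gamma-1+a$ with $a\geq 0$ (allowed by symmetry) reduces the theorem to showing that the achievable gap is exactly $0\leq a\leq\gamma-4$. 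There are thus two assertions to prove: the bound $a\leq\gamma-4$ (necessity), and the realization of every intermediate value (sufficiency).

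For necessity I would first extract a cohomological bound from the two standard sequences on $\mathbb{P}^{1}$. If $f\colon\mathbb{P}^{1}\to\mathbb{P}^{3}$ parametrizes $C$, then $f^{*}\mathcal{O}_{\mathbb{P}^{3}}(1)=\mathcal{O}_{\mathbb{P}^{1}}(\gamma)$, and pulling back the Euler sequence gives $0\to\mathcal{O}\to\mathcal{O}(\gamma)^{\oplus 4}\to f^{*}T_{\mathbb{P}^{3}}\to 0$. Combined with $0\to\mathcal{O}(2)\to f^{*}T_{\mathbb{P}^{3}}\to N_{C/\mathbb{P}^{3}}\to 0$, this shows that $H^{1}(N_{C/\mathbb{P}^{3}}(t))$ is a quotient of $H^{1}(\mathcal{O}(\gamma+t))^{\oplus 4}$, hence vanishes for $t\geq-\gamma-1$; evaluating at $t=-\gamma-1$ forces $a_{1}\geq\gamma$, i.e. $a\leq\gamma-1$. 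The hard part is the remaining gain of three. To reach the sharp $a\leq\gamma-4$ I would analyze a sub-line bundle $\mathcal{O}(a_{2})\hookrightarrow N_{C/\mathbb{P}^{3}}$ of maximal degree and translate its existence into geometry: an excessively positive summand produces a one-parameter family of lines (a focal/scroll condition) forcing $C$ onto a surface of too small a degree, incompatible with $C$ being smooth, irreducible and nondegenerate of degree $\gamma$. Making this precise---equivalently, sharpening the cohomological estimate by using that $f$ is an \emph{embedding} and $C$ spans $\mathbb{P}^{3}$, not merely an immersion---is the main obstacle.

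For sufficiency I would use that the splitting type is upper semicontinuous over the irreducible, $4\gamma$-dimensional parameter space of smooth degree-$\gamma$ rational curves, whose general member is balanced ($a=0$) by Sacchiero's theorem; the locus where $a$ is at least a fixed value is therefore closed. The realization problem then splits into (i) producing the extreme type $a=\gamma-4$ and (ii) producing every intermediate type. For (i) I would argue directly on $\mathbb{P}^{1}$: fix the desired $N_{C/\mathbb{P}^{3}}$, realize the restricted tangent bundle $f^{*}T_{\mathbb{P}^{3}}$ as an extension of $N_{C/\mathbb{P}^{3}}$ by $\mathcal{O}(2)$, and then as a quotient of $\mathcal{O}(\gamma)^{\oplus 4}$ by $\mathcal{O}$ arising from an actual map $f$; the inequality $a\leq\gamma-4$ is precisely the nonemptiness condition for the relevant $\mathrm{Ext}$ and surjectivity data. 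For (ii) I would either repeat such a construction for each intermediate $a$, or pass from one type to an adjacent one by a controlled elementary modification of the curve (a smoothing of a nodal or secant degeneration) that lowers the gap in small steps, invoking semicontinuity to control which strata appear.

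The principal obstacle is thus twofold, and both parts rest on the embedding hypothesis. On the necessity side it is closing the three-unit gap between the elementary bound $a\leq\gamma-1$ and the sharp $a\leq\gamma-4$; on the sufficiency side it is guaranteeing that the bundle-theoretic data yielding a prescribed splitting type actually come from a genuine smooth embedded curve (injective and unramified), rather than merely from an abstract rank-two bundle of the correct degree on $\mathbb{P}^{1}$. Verifying this genericity and smoothness is where the real work lies.
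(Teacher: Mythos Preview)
The paper does not prove this theorem at all: it is quoted verbatim as \cite[Theorem 4.]{EisenbudVandeVen81} and used as a black box to observe that different embeddings of a degree-$\gamma$ rational curve in $\mathbb{P}^{3}$ can have non-isomorphic normal bundles, hence produce blow-ups with non-isomorphic Chow rings. There is therefore no proof in the paper to compare your proposal against.

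That said, your outline is a reasonable sketch of the Eisenbud--Van de Ven argument and you are honest about its incomplete points. A few remarks. Your numerical setup is correct, and the easy bound $a\leq\gamma-1$ from the Euler sequence is standard. The sharp bound $a\leq\gamma-4$ in the original paper is obtained not via a focal/scroll argument but by a more refined analysis of the map $H^{0}(\mathcal{O}(\gamma))^{\oplus4}\to H^{0}(N_{C/\mathbb{P}^{3}}(\gamma-2))$ coming from the two exact sequences, using that the four sections defining $f$ are linearly independent (nondegeneracy of $C$); your identification of ``the embedding hypothesis'' as the crux is correct, but the mechanism you propose is not the one actually used. For sufficiency, Eisenbud and Van de Ven do not rely on semicontinuity plus a single extreme example: they give, for each admissible $a$, an explicit parametrized curve (built from carefully chosen forms of degree $\gamma$) and compute its normal bundle directly. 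Your semicontinuity strategy would show that \emph{some} $a'\geq a$ occurs once the extreme case is built, but not that every intermediate value is hit, so step (ii) as you describe it is genuinely incomplete.
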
 
As a result, given a rational curve $\mathcal{C}$ of a given degree $\gamma\geq 4$, then two different embeddings $\mathcal{C}\hookrightarrow\mathbb{P}^{3}$ and the subsequent blow-ups give rise to varieties with non-isomorphic Chow rings. Moreover, the following result make a huge difference with respect to the study of sequences of point blow-ups. We recall that in \cite{Camazon24} it was proved that the skies of two sequences of point blow-ups of the same length have isomorphic Chow rings.

\begin{cor}
Given two sequences of blow-ups $(Z_{s},...,Z_{0},\pi)$, $(Z_{s}^{'},...,Z_{0}^{'},\pi^{'})$, where $Z_{0}\cong Z_{0}^{'}$, with the same length, $s=s^{'}$, and proximity relations, then $A^{\bullet}(Z_{s})$ and $A^{\bullet}(Z_{s}^{'})$ may be non-isomorphic.
\end{cor}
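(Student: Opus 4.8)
The statement is an existence (``may be'') claim, so the plan is to exhibit an explicit pair of sequences rather than to prove a general impossibility. The input is the Eisenbud--Van de Ven theorem quoted above: fixing any $\gamma\geq 5$, it furnishes two smooth rational curves $C,C'\subset\mathbb{P}^{3}$ of degree $\gamma$ with normal bundles $\mathcal{O}(2\gamma-1)^{\oplus 2}$ (balanced, $a=0$) and $\mathcal{O}(2\gamma-1-a')\oplus\mathcal{O}(2\gamma-1+a')$ with $1\leq a'\leq\gamma-4$, so that blowing up $Z_{0}\cong\mathbb{P}^{3}$ along each yields exceptional divisors $\mathbb{F}_{0}$ and $\mathbb{F}_{2a'}$. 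A single blow-up will not separate them: reading $\mathcal{J}_{1}$ off Theorem \ref{ThmCRalpha+1}, the only data entering the presentation of $A^{\bullet}(Z_{1})$ are $c_{1}(N_{C/Z_{0}})=4\gamma-2$, the intersection $\mu_{0}=\deg C=\gamma$, and $[C]=\gamma h^{2}$, all independent of the splitting type; hence $A^{\bullet}(Z_{1})\cong A^{\bullet}(Z_{1}')$. I must therefore pass to length $s=2$.

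Next I would extend both sequences by one further blow-up that is combinatorially identical but geometrically sensitive to the splitting type: blow up the minimal section $S_{0}\subset E_{1}^{1}=\mathbb{F}_{2a}$ (respectively $S_{0}'\subset\mathbb{F}_{2a'}$). Since $S_{0}$ is a smooth rational curve contained in $E_{1}^{1}$, the center $C_{2}$ is proximate to $C_{1}$ in both sequences, so the two length-$2$ sequences share the same ground, the same length, and the \emph{same} proximity relations. Applying Theorem \ref{ThmCRalpha+1} to $\pi_{2}$, and using $S_{0}^{2}=-2a$ on $\mathbb{F}_{2a}$ together with $0\to N_{S_{0}/E_{1}^{1}}\to N_{S_{0}/Z_{1}}\to N_{E_{1}^{1}/Z_{1}}|_{S_{0}}\to 0$, one computes $c_{1}(N_{C_{2}/Z_{1}})=2\gamma-1-a$ and obtains a class $[S_{0}]\in A^{2}(Z_{1})$ whose $w_{1}^{1}$-coefficient is $2\gamma-1+a$; both depend on $a$. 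Feeding these into the last generators of $\mathcal{J}_{2}$ and computing triple products of the total transforms $h^{2*},e_{1}^{2*},e_{2}^{2*}$ spanning $A^{1}(Z_{2})\cong\mathbb{Z}^{3}$, I get in particular $\deg((e_{2}^{2*})^{3})=-(2\gamma-1-a)$ and $\deg(e_{1}^{2*}(e_{2}^{2*})^{2})=-(2\gamma-1+a)$, while every other triple product is independent of $a$. Thus the symmetric trilinear triple-intersection form on $A^{1}(Z_{2})$ genuinely varies with $a$, the variation being carried by the single term $z^{2}(z-3y)$.

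To turn this presentation-level difference into an honest non-isomorphism I would argue that this cubic form is an invariant of the ring $A^{\bullet}(Z_{2})$ up to the unavoidable ambiguities. The augmentation ideal $A^{+}=\bigoplus_{i>0}A^{i}$ is exactly the nilradical (an element $c+(\text{higher})$ with $c\in A^{0}=\mathbb{Z}$ is nilpotent iff $c=0$, while $A^{\geq 4}=0$), hence is preserved by every ring isomorphism; and $A^{\bullet}(Z_{2})\otimes\mathbb{Q}$ is generated in degree $1$, since each relation $h^{2*}e_{\alpha}^{2*}=\mu_{0}w_{\alpha}^{2*}$ with $\mu_{0}=\gamma\neq 0$ makes $w_{\alpha}^{2*}$ decomposable over $\mathbb{Q}$. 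Consequently the radical filtration recovers $A^{1}_{\mathbb{Q}}=A^{+}/(A^{+})^{2}\otimes\mathbb{Q}$ and the cubing map $A^{1}_{\mathbb{Q}}\to (A^{+})^{3}/(A^{+})^{4}\otimes\mathbb{Q}\cong\mathbb{Q}$ canonically, up to a single scalar, so any isomorphism $A^{\bullet}(Z_{2})\cong A^{\bullet}(Z_{2}')$ induces a linear isomorphism of $A^{1}_{\mathbb{Q}}$ carrying one cubic form to a scalar multiple of the other. In other words, it suffices to show that the associated plane cubic curves in $\mathbb{P}^{2}$ are not projectively equivalent for $a\neq a'$.

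The main obstacle is precisely this last step. Inspecting the three partials shows, at least for explicit small data such as $\gamma=5$ with $a=0$ versus $a=1$, that both cubics are \emph{smooth}; for smooth plane cubics projective equivalence is equivalent to equality of the $j$-invariant, so the problem reduces to the finite computation $j(T_{a})\neq j(T_{a'})$. I expect this to be the technical heart: one computes the $j$-invariant (equivalently, the two Aronhold invariants, or the discriminant together with one further invariant) of the explicit integral cubic $x^{3}-3\gamma xy^{2}-3\gamma xz^{2}-(4\gamma-2)y^{3}-3(2\gamma-1+a)yz^{2}-(2\gamma-1-a)z^{3}$ and checks that it is a non-constant function of $a$. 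Exhibiting two concrete values of $a$ with distinct $j$ then closes the argument and supplies the explicit counterexample contrasting with the point-blow-up case of \cite{Camazon24}.
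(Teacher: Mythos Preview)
Your proposal is correct and parallels what the paper actually does. The paper gives no self-contained proof at the point the corollary is stated; the sentence preceding it (``two different embeddings $\mathcal C\hookrightarrow\mathbb P^3$ and the subsequent blow-ups give rise to varieties with non-isomorphic Chow rings'') is misleading if read as a length-$1$ claim, and you are right that the presentation of $A^\bullet(Z_1)$ in Theorem~\ref{ThmCRalpha+1} depends only on $\gamma$ and $c_1(N)=4\gamma-2$, not on the splitting type. The actual substantiation in the paper is Example~\ref{Ex1}, which builds exactly the length-$2$ sequence you propose: blow up a rational curve of degree $\gamma$, then blow up a section of the resulting Hirzebruch exceptional divisor, with $C_2\to C_1$ in both cases. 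So at the level of the construction your approach and the paper's coincide.

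The difference is in how the non-isomorphism is extracted. Example~\ref{Ex1} writes down a general \emph{graded} homomorphism $\phi$ with undetermined integer coefficients, imposes the relations of $A^\bullet(Z_2')$ on the images, and solves the resulting linear system; the outcome is that any graded isomorphism forces $\gamma_1=\gamma_1'$ and $a+n=a'+n'$, so two sequences with the same $\gamma$ and proximity data but different $a$ finish the corollary immediately. Your route---packaging the triple products into a ternary cubic, recovering $A^1_{\mathbb Q}$ from the nilradical filtration, and then separating the cubics via their $j$-invariants---is more conceptual and has the pleasant side effect of treating ungraded ring isomorphisms, but it replaces an elementary computation by an invariant-theoretic one that you leave open. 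If only the corollary (isomorphism of graded Chow rings) is at stake, the paper's direct method is both shorter and complete; if you want the ungraded statement, keep your nilradical reduction but finish by the same direct analysis of linear maps $A^1_{\mathbb Q}\to A^{1\prime}_{\mathbb Q}$ compatible with the cubic form, which avoids the $j$-invariant computation altogether.
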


Now, a natural question arises: is there any example of two sequences of point and rational curve blow-ups such that the Chow rings of their skies are isomorphic? Within the next examples we will explore some necessary conditions for the existence of such an isomorphism. For the sake simplicity, and in order clarify the exposition, we will just consider sequences of two blow-ups:
\begin{equation*}
\xymatrix{Z_{2}\ar[rd]_{\pi_{2}} & & & & Z_{2}^{'}\ar[ld]^{\pi_{2}^{'}} \\
 & Z_{1}\ar[rd]_{\pi_{1}} & & Z_{1}^{'}\ar[ld]^{\pi_{1}^{'}} & \\
 & & Z_{0} & &}
\end{equation*}
\begin{ex}\label{Ex1}
Let $\pi_{1}: Z_{1}\rightarrow Z_{0}$ be the blow-up with center $C_{1}$ a rational curve of degree $\gamma_{1}$, with $\gamma_{1}\geq 4$, and $\pi_{2}: Z_{2}\rightarrow Z_{1}$ be the blow-up with center $C_{2}$ the section corresponding to the line subbundle $\mathcal{O}_{C_{1}}(2\gamma_{1}-1-a-n)$, that is, $C_{2}\xrightarrow{} C_{1}$, where $n=0$ or $n\geq 2a$. Then, we know that $A^{\bullet}(Z_{2})$ is generated by
\begin{equation*}
\begin{cases}
\left\{h^{2*},e_{1}^{2*},e_{2}^{2*}\right\} & \text{in codimension}\enspace 1, \\
\left\{(h^{2*})^{2},(w_{1}^{2*})^{2},w_{2}^{2*}\right\} & \text{in codimension}\enspace 2, \\
\left\{(h^{2*})^{3}\right\} & \text{in codimension}\enspace 3,
\end{cases}
\end{equation*}
and it follows from Corollary \ref{CorChowRingSeqRCPBU} that the Chow ring $A^{\bullet}(Z_{2})$ is isomorphic to
\begin{equation*}
A^{\bullet}(Z_{2})\cong\frac{\mathbb{Z}\left[h^{2*},e_{1}^{2*},w_{1}^{2*},e_{2}^{2*},w_{2}^{2*}\right]}{\mathcal{A}},
\end{equation*}
where
\begin{multline*}
\mathcal{A}=\left((h^{2*})^{4}, (h^{2*})^{2}\cdot e_{1}^{2*}, h^{2*}\cdot e_{1}^{2*}-\gamma_{1}w_{1}^{2*}, h^{2*}\cdot w_{1}^{2*}, (w_{1}^{2*})^{2}, \right. \\
(e_{1}^{2*})^{2}-(4\gamma_{1}-2)w_{1}^{2*}+\gamma_{1}(h^{2*})^{2}, e_{1}^{2*}\cdot w_{1}^{2*}+(h^{2*})^{3}, (h^{2*})^{2}\cdot e_{2}^{2*}, (e_{1}^{2*})^{2}\cdot e_{2}^{2*}, \\
h^{2*}\cdot e_{1}^{2*}\cdot e_{2}^{2*}, w_{1}^{2*}\cdot e_{2}^{2*}, h^{2*}\cdot e_{2}^{2*}-\gamma_{1}w_{2}^{2*}, e_{1}^{2*}\cdot e_{2}^{2*}-(2\gamma_{1}-1-a-n)w_{2}^{2*}, h^{2*}\cdot w_{2}^{2*}, e_{1}^{2*}\cdot w_{2}^{2}, \\
w_{1}^{2*}\cdot w_{2}^{2*}, (w_{2}^{2*})^{2}, (e_{2}^{2*})^{2}-(2\gamma_{1}-1+a+n)w_{2}^{2*}-(e_{1}^{2*})^{2}+(2\gamma_{1}-1+a+n)w_{1}^{2*}, \\
\left. e_{2}^{2*}\cdot w_{2}^{2*}+(h^{2*})^{3}\right).
\end{multline*}
Let $\pi_{1}^{'}: Z_{1}^{'}\rightarrow Z_{0}$ be the blow-up with center $C_{1}^{'}$ a rational curve of degree $\gamma_{1}^{'}$, with $\gamma_{1}\geq 4$, and $\pi_{2}^{'}: Z_{2}^{'}\rightarrow Z_{1}$ be the blow-up with center $C_{2}^{'}$ the section corresponding to the line subbundle $\mathcal{O}_{C_{1}}(2\gamma_{1}^{'}-1-a^{'}-n^{'})$, that is, $C_{2}^{'}\xrightarrow{} C_{1}^{'}$, where where $n^{'}=0$ or $n^{'}\geq 2a$. Then, we know that $A^{\bullet}(Z_{2}^{'})$ is generated by
\begin{equation*}
\begin{cases}
\left\{h^{2'*},e_{1}^{2'*},e_{2}^{2'*}\right\} & \text{in codimension}\enspace 1, \\
\left\{(h^{2'*})^{2},(w_{1}^{2'*})^{2},w_{2}^{2'*}\right\} & \text{in codimension}\enspace 2, \\
\left\{(h^{2'*})^{3}\right\} & \text{in codimension}\enspace 3,
\end{cases}
\end{equation*}
and it follows from Corollary \ref{CorChowRingSeqRCPBU} that the Chow ring $A^{\bullet}(Z_{2}^{'})$ is isomorphic to:
\begin{equation*}
A^{\bullet}(Z_{2}^{'})\cong\frac{\mathbb{Z}\left[h^{'2*},e_{1}^{2'*},w_{1}^{2'*},e_{2}^{2'*},w_{2}^{2'*}\right]}{\mathcal{A}^{'}},
\end{equation*}
where
\begin{multline*}
\mathcal{A}^{'}=\left((h^{2'*})^{4}, (h^{'2*})^{2}\cdot e_{1}^{2'*}, h^{2'*}\cdot e_{1'}^{2*}-\gamma_{1}^{'}w_{1}^{2'*}, h^{2'*}\cdot w_{1}^{2'*}, (w_{1}^{2'*})^{2}, \right. \\
(e_{1}^{2'*})^{2}-(4\gamma_{1}^{'}-2)w_{1}^{2'*}+\gamma_{1}^{'}(h^{2'*})^{2}, e_{1}^{2'*}\cdot w_{1}^{2'*}+(h^{2'*})^{3}, (h^{2'*})^{2}\cdot e_{2}^{2'*}, (e_{1}^{2'*})^{2}\cdot e_{2}^{2'*}, \\
 h^{2'*}\cdot e_{1}^{2'*}\cdot e_{2}^{2'*}, w_{1}^{2'*}\cdot e_{2}^{2'*}, h^{2'*}\cdot e_{2}^{2'*}-\gamma_{1}^{'}w_{2}^{2'*}, e_{1}^{2*}\cdot e_{2}^{2*}-(2\gamma_{1}^{'}-1-a^{'}-n^{'})w_{2}^{2'*}, h^{2'*}\cdot w_{2}^{2'*}, \\
e_{1}^{2'*}\cdot w_{2}^{2'*}, w_{1}^{2'*}\cdot w_{2}^{2'*}, (w_{2}^{2'*})^{2}, (e_{2}^{2'*})^{2}-(2\gamma_{1}^{'}-1+a^{'}+n^{'})w_{2}^{2'*}-(e_{1}^{2'*})^{2}+(2\gamma_{1}^{'}-1+a^{'}+n^{'})w_{1}^{2'*}, \\
\left. e_{2}^{2'*}\cdot w_{2}^{2'*}+(h^{2'*})^{3}\right).
\end{multline*}
Let us define a graded ring homomorphism $\phi: A^{\bullet}(Z_{2})\rightarrow A^{\bullet}(Z_{2}^{'})$, so
\begin{align*}
& \phi(h^{2*})=a_{0}h^{2'*}+a_{1}e_{1}^{2'*}+a_{2}e_{2}^{2'*}, \\ 
& \phi(e_{1}^{2*})=b_{0}h^{2'*}+b_{1}e_{1}^{2'*}+b_{2}e_{2}^{2'*}, \\
& \phi(e_{2}^{2*})=c_{0}h^{2'*}+c_{1}e_{1}^{2'*}+c_{2}e_{2}^{2'*}, \\
& \phi(w_{1}^{2*})=d_{0}(h^{2'*})^{2}+d_{1}w_{1}^{2'*}+d_{2}w_{2}^{2'*}, \\
& \phi(w_{2}^{2*})=f_{0}(h^{2'*})^{2}+f_{1}w_{1}^{2'*}+f_{2}w_{2}^{2'*}.
\end{align*}
Since $A^{\bullet}(Z_{2}^{'})$ is generated by $\left\{(h^{2'*})^{3}\right\}$ in codimension $3$, then we have that
\begin{equation*}
\phi\left((h^{2*})^{3}\right)=(\phi(h^{2*}))^{3}=(h^{2'*})^{3},
\end{equation*}
so we can conclude that $\phi(h^{2*})=h^{2'*}$, that is, $a_{0}=1, a_{1}=a_{2}=0$. If $\phi$ would be a graded isomorphism, then by \cite[9 Theorem.]{Eick16} the following relations will hold:
\begin{align}
%& \phi\left((h^{2*})^{2}\cdot e_{1}^{2*}\right)=\phi(h^{2*})^{2}\cdot\phi(e_{1}^{2*})=0, \label{EqRel1} \\
%& \phi\left((h^{2*})^{2}\cdot e_{2}^{2*}\right)=\phi(h^{2*})^{2}\cdot\phi(e_{2}^{2*})=0, \label{EqRel2} \\
& \phi(h^{2*}\cdot  e_{1}^{2*}-\gamma_{1}w_{1}^{2*})=\phi(h^{2*})\cdot\phi(e_{1}^{2*})-\gamma_{1}\phi(w_{1}^{2*})=0, \label{EqRel1.2} \\
& \phi(h^{2*}\cdot  e_{2}^{2*}-\gamma_{1}w_{2}^{2*})=\phi(h^{2*})\cdot\phi(e_{2}^{2*})-\gamma_{1}\phi(w_{2}^{2*})=0, \label{EqRel2.2} \\
& \phi(h^{2*}\cdot  w_{1}^{2*})=\phi(h^{2*})\cdot\phi(w_{1}^{2*})=0, \label{EqRel3.2} \\
& \phi(h^{2*}\cdot  w_{2}^{2*})=\phi(h^{2*})\cdot\phi(w_{2}^{2*})=0, \label{EqRel4.2} \\
& \phi(e_{1}^{2*}\cdot  e_{2}^{2*}-(2\gamma_{1}-1-a-n)w_{2}^{2*})=\phi(e_{1}^{2*})\cdot\phi(e_{2}^{2*})-(2\gamma_{1}-1-a-n)\phi(w_{2}^{2*})=0, \label{EqRel5.2} \\
& \phi(e_{1}^{2*}\cdot w_{2}^{2*})=\phi(e_{1}^{2*})\cdot\phi(w_{2}^{2*})=0, \label{EqRel6.2} \\
& \phi(w_{1}^{2*}\cdot e_{2}^{2*})=\phi(w_{1}^{2*})\cdot\phi(e_{2}^{2*})=0, \label{EqRel7.2} \\
& \phi(w_{1}^{2*}\cdot w_{2}^{2*})=\phi(w_{1}^{2*})\cdot\phi(w_{2}^{2*})=0, \label{EqRel8.2} \\
& \phi(e_{1}^{2*}\cdot w_{1}^{2*}+(h^{2*})^{3})=\phi(e_{1}^{2*})\cdot\phi(w_{1}^{2*})+\phi(h^{2*})^{3}=0, \label{EqRel9.2} \\
& \phi(e_{2}^{2*}\cdot w_{2}^{2*}+(h^{2*})^{3})=\phi(e_{2}^{2*})\cdot\phi(w_{2}^{2*})+\phi(h^{2*})^{3}=0, \label{EqRel10.2} \\
& \phi\left((e_{1}^{2*})^{2}-(4\gamma_{1}-2)w_{1}^{2*}+\gamma_{1}(h^{2*})^{2}\right)=\phi(e_{1}^{2*})^{2}-(4\gamma_{1}-2)\phi(w_{1}^{2*})+\gamma_{1}\phi(h^{2*})^{2}=0. \label{EqRel11.2} \\
& \phi\left((e_{2}^{2*})^{2}-(2\gamma_{1}-1+a+n)w_{2}^{2*}-(e_{1}^{2*})^{2}+(2\gamma_{1}-1+a+n)w_{1}^{2*})\right)=\nonumber \\ 
& \phi(e_{2}^{2*})^{2}-(2\gamma_{1}-1-a-n)\phi(w_{2}^{2*})-\phi(e_{1}^{2*})^{2}+(2\gamma_{1}-1+a+n)\phi(w_{1}^{2*})=0. \label{EqRel12.2}
\end{align}
From Equations (\ref{EqRel1.2}), (\ref{EqRel2.2}), (\ref{EqRel3.2}) and (\ref{EqRel4.2}), we have that: 
\begin{align*}
b_{0} & =d_{0}=0, \\
d_{1} & =\frac{\gamma_{1}^{'}}{\gamma_{1}}b_{1}, \\
d_{2} & =\frac{\gamma_{1}^{'}}{\gamma_{1}}b_{2}, \\
c_{0} & =f_{0}=0, \\
f_{1} & =\frac{\gamma_{1}^{'}}{\gamma_{1}}c_{1}, \\
f_{2} & =\frac{\gamma_{1}^{'}}{\gamma_{1}}c_{2}.
\end{align*}
Moreover, Equations (\ref{EqRel6.2}) and (\ref{EqRel7.2}) implies that $b_{1}f_{1}=-b_{2}f_{2}$. Now, as a consequence of Equations (\ref{EqRel9.2}) and (\ref{EqRel10.2}) the following relations hold $b_{1}d_{1}+b_{2}d_{2}=1$ and $c_{1}f_{1}+c_{2}f_{2}=1$. Finally, it follows from Equations (\ref{EqRel5.2}) and (\ref{EqRel11.2}) that:
\begin{align*}
& b_{1}^{2}(4\gamma_{1}^{'}-2)+b_{2}^{2}(2\gamma_{1}^{'}-1-a^{'}-n^{'})-d_{1}(4\gamma_{1}-2)=0, \\ 
& b_{1}b_{2}(2\gamma_{1}^{'}-1-a^{'}-n^{'})+b_{2}^{2}(2\gamma_{1}^{'}-1+a^{'}+n^{'})-d_{2}(4\gamma_{1}-2)=0, \\
& b_{1}c_{1}(4\gamma_{1}^{'}-2)+b_{2}c_{2}(2\gamma_{1}^{'}-1-a^{'}-n^{'})-f_{1}(2\gamma_{1}-1-a-n)=0, 
\end{align*} 
\begin{multline*}
\quad\; b_{1}c_{2}(2\gamma_{1}^{'}-1-a^{'}-n^{'})+b_{2}c_{1}(2\gamma_{1}^{'}-1-a^{'}-n^{'})+b_{2}c_{2}(2\gamma_{1}^{'}-1+a^{'}+n^{'})- \\
f_{2}(2\gamma_{1}-1-a-n)=0. 
\end{multline*}
We can conclude that if $\phi$ would be a graded isomorphism, then $\gamma_{1}=\gamma_{1}^{'}$, $a+n=a^{'}+n^{'}$, $b_{1}=d_{1}=c_{2}=f_{2}=1$ and $b_{2}=d_{2}=c_{1}=f_{1}=0$.
% $-(b_{1})^{2}-(b_{2})^{2}=-(c_{1})^{2}-(c_{2})^{2}=-1$, so we can conclude that if $\phi$ would be a graded isomorphism, then
%\begin{enumerate}
%\item either $b_{1}=d_{1}=c_{2}=f_{2}=1$ and $b_{2}=d_{2}=c_{1}=f_{1}=0$, that is, $\phi(e_{1}^{2*})=e_{1}^{2'*}, \phi(w_{1}^{2*})=w_{1}^{2'*}, \phi(e_{2}^{2*})=e_{2}^{2'*}, \phi(w_{2}^{2*})=w_{2}^{2'*}$, \label{CaseNPermE1E2}
%\item or $b_{1}=d_{1}=c_{2}=f_{2}=0$ and $b_{2}=d_{2}=c_{1}=f_{1}=1$, that is, $\phi(e_{1}^{2*})=e_{2}^{2'*}, \phi(w_{1}^{2*})=w_{2}^{2'*}, \phi(e_{2}^{2*})=e_{1}^{2'*}, \phi(w_{2}^{2*})=w_{1}^{2'*}$. \label{CasePermE1E2}
%\end{enumerate}
\end{ex}

\begin{ex}\label{Ex2}
Let $\mathcal{C}, \mathcal{D}\subset\mathbb{P}^{3}$ be two smooth rational curve of degree $\gamma_{1}$ and $\gamma_{2}$, respectively, and $\pi_{1}: Z_{1}\rightarrow Z_{0}=\mathbb{P}^{3}$ the blow-up of $\mathbb{P}^{3}$ with center $C_{1}=\mathcal{C}$. If we denote by $\widetilde{\mathcal{D}}$ to the strict transform of $\mathcal{D}$ $in Z_{1}$, let $\pi_{2}: Z_{2}\rightarrow Z_{1}$ be the blow-up of $Z_{1}$ with center $C_{2}=\widetilde{\mathcal{D}}$, that is, $C_{2}\xrightarrow{t} C_{1}$. Let $\beta_{1,2}$ denote the cardinality of the set $\mathcal{C}\cap\mathcal{D}$. Then, we know that $A^{\bullet}(Z_{2})$ is generated by
\begin{equation*}
\begin{cases}
\left\{h^{2*},e_{1}^{2*},e_{2}^{2*}\right\} & \text{in codimension}\enspace 1, \\
\left\{(h^{2*})^{2},(w_{1}^{2*})^{2},w_{2}^{2*}\right\} & \text{in codimension}\enspace 2, \\
\left\{(h^{2*})^{3}\right\} & \text{in codimension}\enspace 3,
\end{cases}
\end{equation*}
and it follows from Corollary \ref{CorChowRingSeqRCPBU} that the Chow ring $A^{\bullet}(Z_{2})$ is isomorphic to:
\begin{equation*}
A^{\bullet}(Z_{2})\cong\frac{\mathbb{Z}\left[h^{2*},e_{1}^{2*},w_{1}^{2*},e_{2}^{2*},w_{2}^{2*}\right]}{\mathcal{A}},
\end{equation*}
where
\begin{multline*}
\mathcal{A}=\left((h^{2*})^{4}, (h^{2*})^{2}\cdot e_{1}^{2*}, h^{2*}\cdot e_{1}^{2*}-\gamma_{1} w_{1}^{2*}, h^{2*}\cdot w_{1}^{2*}, (w_{1}^{2*})^{2}, \right. \\
(e_{1}^{2*})^{2}-(4\gamma_{1}-2)w_{1}^{2*}+\gamma_{1}(h^{2*})^{2}, e_{1}^{2*}\cdot w_{1}^{2*}+(h^{2*})^{3}, (h^{2*})^{2}\cdot e_{2}^{2*}, (e_{1}^{2*})^{2}\cdot e_{2}^{2*}, \\
h^{2*}\cdot e_{1}^{2*}\cdot e_{2}^{2*}, w_{1}^{2*}\cdot e_{2}^{2*}, h^{2*}\cdot e_{2}^{2*}-\gamma_{2} w_{2}^{2*}, e_{1}^{2*}\cdot e_{2}^{2*}-\beta_{1,2}w_{2}^{2*}, h^{2*}\cdot w_{2}^{2*}, \\
e_{1}^{2*}\cdot w_{2}^{2}, w_{1}^{2*}\cdot w_{2}^{2*}, (w_{2}^{2*})^{2},(e_{2}^{2*})^{2}-(4\gamma_{2}-2-\beta_{1,2})w_{2}^{2*}+\eta(h^{2*})^{2}-\beta_{1,2}w_{1}^{2*}, \\
\left. e_{2}^{2*}\cdot w_{2}^{2*}+(h^{2*})^{3}\right).
\end{multline*}
Let $\mathcal{C}^{'}, \mathcal{D}^{'}\subset\mathbb{P}^{3}$ be two smooth rational curve of degree $\gamma_{1}^{'}$ and $\gamma_{2}^{'}$, respectively, and $\pi_{1}^{'}: Z_{1}^{'}\rightarrow Z_{0}$ be the blow-up of $\mathbb{P}^{3}$ with center $C_{1}^{'}=\mathcal{C}^{'}$, $\widetilde{\mathcal{D}}^{'}$ to the strict transform of $\mathcal{D}^{'}$ $in Z_{1}^{'}$, and $\pi_{2}: Z_{2}^{'}\rightarrow Z_{1}^{'}$ be the blow-up of $Z_{1}^{'}$ with center $C_{2}^{'}=\widetilde{\mathcal{D}}^{'}$, that is $C_{2}^{'}\xrightarrow C_{1}^{'}$. Let $\beta_{1,2}^{'}$ denote the cardinality of the set $\mathcal{C}^{'}\cap\mathcal{D}^{'}$. Then, we know that $A^{\bullet}(Z_{2}^{'})$ is generated by
\begin{equation*}
\begin{cases}
\left\{h^{2'*},e_{1}^{2'*},e_{2}^{2'*}\right\} & \text{in codimension}\enspace 1, \\
\left\{(h^{2'*})^{2},(w_{1}^{2'*})^{2},w_{2}^{2'*}\right\} & \text{in codimension}\enspace 2, \\
\left\{(h^{2'*})^{3}\right\} & \text{in codimension}\enspace 3,
\end{cases}
\end{equation*}
and it follows from Corollary \ref{CorChowRingSeqRCPBU} that the Chow ring $A^{\bullet}(Z_{2}^{'})$ is isomorphic to:
\begin{equation*}
A^{\bullet}(Z_{2}^{'})\cong\frac{\mathbb{Z}\left[h^{'2*},e_{1}^{2'*},w_{1}^{2'*},e_{2}^{2'*},w_{2}^{2'*}\right]}{\mathcal{A}^{'}},
\end{equation*}
where
\begin{multline*}
\mathcal{A}^{'}=\left((h^{2'*})^{4}, (h^{'2*})^{2}\cdot e_{1}^{2'*}, h^{2'*}\cdot e_{1'}^{2*}-\gamma_{1}^{'} w_{1}^{2'*}, h^{2'*}\cdot w_{1}^{2'*}, (w_{1}^{2'*})^{2}, \right. \\
(e_{1}^{2'*})^{2}-(4\gamma_{1}^{'}-2)w_{1}^{2'*}+\eta(h^{2'*})^{2}, e_{1}^{2'*}\cdot w_{1}^{2'*}+(h^{2'*})^{3}, (h^{2'*})^{2}\cdot e_{2}^{2'*}, (e_{1}^{2'*})^{2}\cdot e_{2}^{2'*}, \\
h^{2'*}\cdot e_{1}^{2'*}\cdot e_{2}^{2'*}, w_{1}^{2'*}\cdot e_{2}^{2'*}, h^{2'*}\cdot e_{2}^{2'*}-\gamma_{2}^{'} w_{2}^{2'*}, e_{1}^{2*}\cdot e_{2}^{2*}-\beta_{1,2}^{'}w_{2}^{2'*}, \\ 
h^{2'*}\cdot w_{2}^{2'*}, e_{1}^{2'*}\cdot w_{2}^{2'*}, w_{1}^{2'*}\cdot w_{2}^{2'*}, (w_{2}^{2'*})^{2}, (e_{2}^{2'*})^{2}-(4\gamma_{2}^{'}-2-\beta_{1,2})w_{2}^{2'*}+\gamma(h^{2'*})^{2}-\beta_{1,2}^{'}w_{1}^{2'*}, \\
\left. e_{2}^{2'*}\cdot w_{2}^{2'*}+(h^{2'*})^{3}\right).
\end{multline*}
Let us define a graded ring homomorphism $\phi: A^{\bullet}(Z_{2})\rightarrow A^{\bullet}(Z_{2}^{'})$, so
\begin{align*}
& \phi(h^{2*})=a_{0}h^{2'*}+a_{1}e_{1}^{2'*}+a_{2}e_{2}^{2'*}, \\ 
& \phi(e_{1}^{2*})=b_{0}h^{2'*}+b_{1}e_{1}^{2'*}+b_{2}e_{2}^{2'*}, \\
& \phi(e_{2}^{2*})=c_{0}h^{2'*}+c_{1}e_{1}^{2'*}+c_{2}e_{2}^{2'*}, \\
& \phi(w_{1}^{2*})=d_{0}(h^{2'*})^{2}+d_{1}w_{1}^{2'*}+d_{2}w_{2}^{2'*}, \\
& \phi(w_{2}^{2*})=f_{0}(h^{2'*})^{2}+f_{1}w_{1}^{2'*}+f_{2}w_{2}^{2'*}.
\end{align*}
Since $A^{\bullet}(Z_{2}^{'})$ is generated by $\left\{(h^{2'*})^{3}\right\}$ in codimension $3$, then we have that
\begin{equation*}
\phi\left((h^{2*})^{3}\right)=(\phi(h^{2*}))^{3}=(h^{2'*})^{3},
\end{equation*}
so we can conclude that $\phi(h^{2*})=h^{2'*}$, that is, $a_{0}=1, a_{1}=a_{2}=0$. If $\phi$ would be a graded isomorphism, then by \cite[9 Theorem.]{Eick16} the following relations will hold:
\begin{align}
%& \phi\left((h^{2*})^{2}\cdot e_{1}^{2*}\right)=\phi(h^{2*})^{2}\cdot\phi(e_{1}^{2*})=0, \label{EqRel1} \\
%& \phi\left((h^{2*})^{2}\cdot e_{2}^{2*}\right)=\phi(h^{2*})^{2}\cdot\phi(e_{2}^{2*})=0, \label{EqRel2} \\
& \phi(h^{2*}\cdot  e_{1}^{2*}-\gamma w_{1}^{2*})=\phi(h^{2*})\cdot\phi(e_{1}^{2*})-\gamma\phi(w_{1}^{2*})=0, \label{EqRel1.4} \\
& \phi(h^{2*}\cdot  e_{2}^{2*}-\eta w_{2}^{2*})=\phi(h^{2*})\cdot\phi(e_{2}^{2*})-\eta\phi(w_{2}^{2*})=0, \label{EqRel2.4} \\
& \phi(h^{2*}\cdot  w_{1}^{2*})=\phi(h^{2*})\cdot\phi(w_{1}^{2*})=0, \label{EqRel3.4} \\
& \phi(h^{2*}\cdot  w_{2}^{2*})=\phi(h^{2*})\cdot\phi(w_{2}^{2*})=0, \label{EqRel4.4} \\
& \phi(e_{1}^{2*}\cdot  e_{2}^{2*}-\beta_{1,2}w_{2}^{2*})=\phi(e_{1}^{2*})\cdot\phi(e_{2}^{2*})-\beta_{1,2}\phi(w_{2}^{2*})=0, \label{EqRel5.4} \\
& \phi(e_{1}^{2*}\cdot w_{2}^{2*})=\phi(e_{1}^{2*})\cdot\phi(w_{2}^{2*})=0, \label{EqRel6.4} \\
& \phi(w_{1}^{2*}\cdot e_{2}^{2*})=\phi(w_{1}^{2*})\cdot\phi(e_{2}^{2*})=0, \label{EqRel7.4} \\
& \phi(w_{1}^{2*}\cdot w_{2}^{2*})=\phi(w_{1}^{2*})\cdot\phi(w_{2}^{2*})=0, \label{EqRel8.4} \\
& \phi(e_{1}^{2*}\cdot w_{1}^{2*}+(h^{2*})^{3})=\phi(e_{1}^{2*})\cdot\phi(w_{1}^{2*})+\phi(h^{2*})^{3}=0, \label{EqRel9.4} \\
& \phi(e_{2}^{2*}\cdot w_{2}^{2*}+(h^{2*})^{3})=\phi(e_{2}^{2*})\cdot\phi(w_{2}^{2*})+\phi(h^{2*})^{3}=0, \label{EqRel10.4} \\
& \phi\left((e_{1}^{2*})^{2}-(4\gamma_{1}-2)w_{1}^{2*}+\gamma_{1}(h^{2*})^{2}\right)=\phi(e_{1}^{2*})^{2}-(4\gamma_{1}-2)\phi(w_{1}^{2*})+\gamma_{1}\phi(h^{2*})^{2}=0. \label{EqRel11.4} \\
& \phi\left((e_{2}^{2*})^{2}-(4\gamma_{2}-2-\beta_{1,2})w_{2}^{2*}+\gamma_{2}(h^{2*})^{2}-\beta_{1,2}w_{1}^{2*}\right)= \nonumber \\
& \phi(e_{2}^{2*})^{2}-(4\gamma_{2}-2-\beta_{1,2})\phi(w_{2}^{2*})+\gamma_{2}\phi(h^{2*})^{2}-\beta_{1,2}\phi(w_{1}^{2*})=0. \label{EqRel12.4}
\end{align}
From Equations (\ref{EqRel1.4}), (\ref{EqRel2.4}), (\ref{EqRel3.4}) and (\ref{EqRel4.4}), we have that: 
\begin{align*}
b_{0} & =d_{0}=0, \\
d_{1} & =\frac{\gamma_{1}^{'}}{\gamma_{1}}b_{1}, \\
d_{2} & =\frac{\gamma_{2}^{'}}{\gamma_{1}}b_{2}, \\
c_{0} & =f_{0}=0, \\
f_{1} & =\frac{\gamma_{1}^{'}}{\gamma_{2}}c_{1}, \\
f_{2} & =\frac{\gamma_{2}^{'}}{\gamma_{2}}c_{2}.
\end{align*}
Moreover, Equations (\ref{EqRel6.4}) and (\ref{EqRel7.4}) implies that $b_{1}f_{1}=-b_{2}f_{2}$. Now, as a consequence of Equations (\ref{EqRel9.4}) and (\ref{EqRel10.4}) the following relations hold $b_{1}d_{1}+b_{2}d_{2}=1$ and $c_{1}f_{1}+c_{2}f_{2}=1$. Finally, it follows from Equations (\ref{EqRel5.4}) and (\ref{EqRel11.4}) that:
\begin{align*}
& b_{1}^{2}(4\gamma_{1}^{'}-2)+b_{2}^{2}\beta_{1,2}^{'}-d_{1}(4\gamma_{1}-2)=0, \\
& 2b_{1}b_{2}\beta_{1,2}^{'}+b_{2}^{2}(4\gamma_{2}^{'}-2-\beta_{1,2}^{'})-d_{2}(4\gamma_{1}-2)=0, \\
& b_{1}c_{1}(4\gamma_{1}^{'}-2)+b_{2}c_{2}\beta_{1,2}^{'}-f_{1}\beta_{1,2}=0, \\
& b_{1}c_{2}\beta_{1,2}^{'}+b_{2}c_{1}\beta_{1,2}^{'}+b_{2}c_{2}(4\gamma_{2}^{'}-2-\beta_{1,2}^{'})-f_{2}\beta_{1,2}=0.
\end{align*}
We can conclude that if $\phi$ would be a graded isomorphism, then $\gamma_{1}=\gamma_{1}^{'}$, $\gamma_{2}=\gamma_{2}^{'}$, $\beta_{1,2}=\beta_{1,2}^{'}$, $b_{1}=d_{1}=c_{2}=f_{2}=1$ and $b_{2}=d_{2}=c_{1}=f_{1}=0$.
%Moreover, Equations (\ref{EqRel6.2}) and (\ref{EqRel7.2}) implies that $b_{1}f_{1}=-b_{2}f_{2}$. Now, as a consequence of Equations (\ref{EqRel9.2}) and (\ref{EqRel10.2}) the following relations hold $b_{1}d_{1}+b_{2}d_{2}=1$ and $c_{1}f_{1}+c_{2}f_{2}=1$. Finally, it follows from Equations (\ref{EqRel11.2}) and (\ref{EqRel12.2}) that
\end{ex}

\begin{ex}\label{Ex3}
Let $\mathcal{C}\subset\mathbb{P}^{3}$ be a smooth rational curve of degree $\gamma_{1}$, $P\in\mathcal{C}$ is a closed point and $\pi_{1}: Z_{1}\rightarrow Z_{0}=\mathbb{P}^{3}$ the blow-up of $\mathbb{P}^{3}$ with center $C_{1}=P$. If we denote by $\widetilde{\mathcal{C}}$ to the strict transform of $\mathcal{C}$ $in Z_{1}$, let $\pi_{2}: Z_{2}\rightarrow Z_{1}$ be the blow-up of $Z_{1}$ with center $C_{2}=\widetilde{\mathcal{C}}$, that is $C_{2}\xrightarrow{t} C_{1}$. Then, we know that $A^{\bullet}(Z_{2})$ is generated by
\begin{equation*}
\begin{cases}
\left\{h^{2*},e_{1}^{2*},e_{2}^{2*}\right\} & \text{in codimension}\enspace 1, \\
\left\{(h^{2*})^{2},(e_{1}^{2*})^{2},w_{2}^{2*}\right\} & \text{in codimension}\enspace 2, \\
\left\{(h^{2*})^{3}\right\} & \text{in codimension}\enspace 3,
\end{cases}
\end{equation*}
and it follows from Corollary \ref{CorChowRingSeqRCPBU} that the Chow ring $A^{\bullet}(Z_{2})$ is isomorphic to:
%If $P\in\mathcal{C}$ is a closed point, and we denote by $F$ to the inverse image $\pi_{1}^{-1}(P)$, let $\pi_{2}: Z_{2}\rightarrow Z_{1}$ be the blow-up of $Z_{1}$ with center $C_{2}=F$.\\
\begin{equation*}
A^{\bullet}(Z_{2})\cong\frac{\mathbb{Z}\left[h^{2*},e_{1}^{2*},e_{2}^{2*},w_{2}^{2*}\right]}{\mathcal{A}},
\end{equation*}
where
\begin{multline*}
\mathcal{A}=\left((h^{2*})^{4}, h^{2*}\cdot e_{1}^{2*}, (e_{1}^{2*})^{3}-(h^{2*})^{3}, (h^{2*})^{2}\cdot e_{2}^{2*}, (e_{1}^{2*})^{2}\cdot e_{2}^{2*}, h^{2*}\cdot e_{1}^{2*}\cdot e_{2}^{2*}, \right. \\
h^{2*}\cdot e_{2}^{2*}-\gamma_{1}w_{2}^{2*}, e_{1}^{2*}\cdot e_{2}^{2*}-w_{2}^{2*}, h^{2*}\cdot w_{2}^{2*}, e_{1}^{2*}\cdot w_{2}^{2*}, (w_{2}^{2*})^{2}, \\
\left. (e_{2}^{2*})^{2}-(4\gamma_{1}-4)w_{2}^{2*}+\gamma_{1}(h^{2*})^{2}+(e_{1}^{2*})^{2}, e_{2}^{2*}\cdot w_{2}^{2*}+(h^{2*})^{3}\right);
\end{multline*} 
Let $\mathcal{C}^{'}\subset\mathbb{P}^{3}$ be a smooth rational curve of degree $\gamma_{1}^{'}$, $Q\in\mathcal{C}^{'}$ is a closed point and $\pi_{1}^{'}: Z_{1}^{'}\rightarrow Z_{0}$ be the blow-up of $\mathbb{P}^{3}$ with center $C_{1}^{'}=\mathcal{C}^{'}$. If we denote by $F$ the inverse image $\pi_{1}^{'-1}(Q)$, let $\pi_{2}^{'}: Z_{2}^{'}\rightarrow Z_{1}^{'}$ the blow-up of $Z_{1}^{'}$ with center $C_{2}^{'}=F$, that is, $C_{2}^{'}\xrightarrow C_{1}^{'}$. Then, we know that $A^{\bullet}(Z_{2}^{'})$ is generated by
\begin{equation*}
\begin{cases}
\left\{h^{2'*},e_{1}^{2'*},e_{2}^{2'*}\right\} & \text{in codimension}\enspace 1, \\
\left\{(h^{2'*})^{2},w_{1}^{2'*},w_{2}^{2'*}\right\} & \text{in codimension}\enspace 2, \\
\left\{(h^{2'*})^{3}\right\} & \text{in codimension}\enspace 3.
\end{cases}
\end{equation*}
and it follows from Corollary \ref{CorChowRingSeqRCPBU} that the Chow ring $A^{\bullet}(Z_{2}^{'})$ is isomorphic to:
\begin{equation*}
A^{\bullet}(Z_{2}^{'})\cong\frac{\mathbb{Z}\left[h^{2'*},e_{1}^{2'*},w_{1}^{2'*},e_{2}^{2'*},w_{2}^{2'*}\right]}{\mathcal{A}^{'}},
\end{equation*}
where
\begin{multline*}
\mathcal{A}^{'}=\left((h^{2'*})^{4}, (h^{2'*})^{2}\cdot e_{1}^{2'*}, h^{2'*}\cdot e_{1}^{2'*}-\gamma_{1}^{'}w_{1}^{2'*}, h^{2'*}\cdot w_{1}^{2'*}, (w_{1}^{2'*})^{2}, \right. \\
(e_{1}^{2'*})^{2}-(4\gamma_{1}^{'}-2)w_{1}^{2'*}+\gamma_{1}^{'}(h^{2'*})^{2}, e_{1}^{2'*}\cdot w_{1}^{2'*}+(h^{2'*})^{3}, h^{2'*}\cdot e_{2}^{2'*}, (e_{1}^{2'*})^{2}\cdot e_{2}^{2'*}, w_{1}^{2'*}\cdot e_{2}^{2'*}, \\
e_{1}^{2'*}\cdot e_{2}^{2'*}-w_{2}^{2'*}, h^{2*}\cdot w_{2}^{2'*}, e_{1}^{2'*}\cdot w_{2}^{2'*}, w_{1}^{2'*}\cdot w_{2}^{2'*}, (w_{2}^{2'*})^{2}, (e_{2}^{2'*})^{2}+w_{2}^{2'*}+w_{1}^{2'*}), \\
\left. e_{2}^{2'*}\cdot w_{2}^{2'*}+(h^{2'*})^{3}\right).
\end{multline*}
Let us define a graded ring homomorphism $\phi: A^{\bullet}(Z_{2})\rightarrow A^{\bullet}(Z_{2}^{'})$, so
\begin{align*}
& \phi(h^{2*})=a_{0}h^{2'*}+a_{1}e_{1}^{2'*}+a_{2}e_{2}^{2'*}, \\ 
& \phi(e_{1}^{2*})=b_{0}h^{2'*}+b_{1}e_{1}^{2'*}+b_{2}e_{2}^{2'*}, \\
& \phi(e_{2}^{2*})=c_{0}h^{2'*}+c_{1}e_{1}^{2'*}+c_{2}e_{2}^{2'*}, \\
%& \phi(w_{1}^{2*})=d_{0}(h^{2'*})^{2}+d_{1}w_{2}^{2'*}, \\
& \phi(w_{2}^{2*})=f_{0}(h^{2'*})^{2}+f_{1}w_{1}^{2'*}+f_{2}w_{2}^{2'*}.
\end{align*}
Since $A^{\bullet}(Z_{2}^{'})$ is generated by $\left\{(h^{2'*})^{3}\right\}$ in codimension $3$, then we have that
\begin{equation*}
\phi\left((h^{2*})^{3}\right)=(\phi(h^{2*}))^{3}=(h^{2'*})^{3},
\end{equation*}
so we can conclude that $\phi(h^{2*})=h^{2'*}$, that is, $a_{0}=1, a_{1}=a_{2}=0$. If $\phi$ would be a graded isomorphism, then by \cite[9 Theorem.]{Eick16} the following relations will hold:
\begin{align}
%& \phi\left((h^{2*})^{2}\cdot e_{1}^{2*}\right)=\phi(h^{2*})^{2}\cdot\phi(e_{1}^{2*})=0, \label{EqRel1} \\
%& \phi\left((h^{2*})^{2}\cdot e_{2}^{2*}\right)=\phi(h^{2*})^{2}\cdot\phi(e_{2}^{2*})=0, \label{EqRel2} \\
& \phi(h^{2*}\cdot  e_{1}^{2*})=\phi(h^{2*})\cdot\phi(e_{1}^{2*})=0, \label{EqRel1.3} \\
& \phi(h^{2*}\cdot  e_{2}^{2*}-\gamma_{1}w_{2}^{2*})=\phi(h^{2*})\cdot\phi(e_{2}^{2*})-\gamma_{1}\phi(w_{2}^{2*})=0, \label{EqRel2.3} \\
%& \phi(h^{2*}\cdot  w_{1}^{2*})=\phi(h^{2*})\cdot\phi(w_{1}^{2*})=0, \label{EqRel3'} \\
& \phi(h^{2*}\cdot  w_{2}^{2*})=\phi(h^{2*})\cdot\phi(w_{2}^{2*})=0, \label{EqRel3.3} \\
& \phi(e_{1}^{2*}\cdot  e_{2}^{2*}-w_{2}^{2*})=\phi(e_{1}^{2*})\cdot\phi(e_{2}^{2*})-\phi(w_{2}^{2*})=0, \label{EqRel4.3} \\
& \phi(e_{1}^{2*}\cdot w_{2}^{2*})=\phi(e_{1}^{2*})\cdot\phi(w_{2}^{2*})=0, \label{EqRel5.3} \\
%& \phi(w_{1}^{2*}\cdot e_{2}^{2*})=\phi(w_{1}^{2*})\cdot\phi(e_{2}^{2*})=0, \label{EqRel7'} \\
%& \phi(w_{1}^{2*}\cdot w_{2}^{2*})=\phi(w_{1}^{2*})\cdot\phi(w_{2}^{2*})=0, \label{EqRel8'} \\
& \phi(e_{2}^{2*}\cdot w_{2}^{2*}+(h^{2*})^{3})=\phi(e_{2}^{2*})\cdot\phi(w_{2}^{2*})+\phi(h^{2*})^{3}=0, \label{EqRel6.3} \\
& \phi((e_{1}^{2*})^{3}-(h^{2*})^{3})=\phi(e_{1}^{2*})^{3}-\phi(h^{2*})^{3}=0, \label{EqRel7.3} \\
& \phi\left((e_{2}^{2*})^{2}-(4\gamma_{1}-4)w_{2}^{2*}+\gamma_{1}(h^{2*})^{2}+(e_{1}^{2*})^{2}\right)= \nonumber \\
& \phi(e_{2}^{2*})^{2}-(4\gamma_{1}-4)\phi(w_{2}^{2*})+\gamma_{1}\phi(h^{2*})^{2}+\phi(e_{1}^{2*})^{2}=0. \label{EqRel8.3} 
%& \phi\left((e_{2}^{2*})^{2}+w_{2}^{2*}+w_{1}^{2*})\right=\phi(e_{2}^{2*})^{2}+\phi(w_{2}^{2*})+\phi(w_{1}^{2*})=0. \label{EqRel12'}
\end{align}
From Equation (\ref{EqRel1.3}) we have that $b_{0}=b_{1}=0$. Moreover, Equations (\ref{EqRel2.3}) and (\ref{EqRel3.3}) implies that $c_{0}=f_{0}=f_{2}=0$ and $c_{1}\gamma_{1}^{'}=f_{1}\gamma_{1}$. Now, as a consequence of Equations (\ref{EqRel5.3}) and (\ref{EqRel6.3}), the following relation hold:  $c_{1}f_{1}=1$. Finally, it follows from Equations (\ref{EqRel4.3}) and (\ref{EqRel7.3}) that:
\begin{align*}
& b_{2}c_{2}+f_{1}=0, \\
& b_{2}(c_{1}+c_{2})=0, \\
& b_{2}^{3}=1.
\end{align*}
So we can conclude that if $\phi$ would be a graded isomorphism, then $\gamma_{1}=\gamma_{1}^{'}$ and
\begin{enumerate}
\item either $b_{1}=f_{2}=0$, $b_{2}=c_{1}=f_{1}=1$ and $c_{2}=-1$, that is, $\phi(e_{1}^{2*})=e_{2}^{2'*}, \phi(e_{2}^{2*})=e_{1}^{2'*}-e_{2}^{2'*}, \phi(w_{2}^{2*})=w_{1}^{2'*}$, \label{CaseC1}
\item or $b_{1}=f_{2}=0$, $b_{2}=c_{2}=1$ and $c_{1}=f_{1}=-1$, that is, $\phi(e_{1}^{2*})=e_{2}^{2'*}, \phi(e_{2}^{2*})=-e_{1}^{2'*}+e_{2}^{2'*}, \phi(w_{2}^{2*})=-w_{1}^{2'*}$. \label{CaseC2}
\end{enumerate}
In the Case \ref{CaseC1} we have that $\phi^{-1}(e_{1}^{2'*})=e_{1}^{2*}+e_{2}^{2*}$ and $\phi^{-1}(w_{1}^{2'*})=w_{2}^{2*}$, whereas in the Case \ref{CaseC2} it holds that $\phi^{-1}(e_{1}^{2'*})=e_{1}^{2*}-e_{2}^{2*}$ and $\phi^{-1}(w_{1}^{2'*})=-w_{2}^{2*}$. If $\phi^{-1}$ would be a graded isomorphism, then the relation $\phi^{-1}(e_{1}^{2'*})^{2}-(4\gamma_{1}-2)\phi^{-1}(w_{1}^{2'*})+\gamma_{1}\phi^{-1}(h^{2'*})^{2}=0$ will hold, but this is not true in Case \ref{CaseC2}. As a result, conditions $\gamma_{1}=\gamma_{1}^{'}$, $b_{1}=f_{2}=0$, $b_{2}=c_{1}=f_{1}=1$ and $c_{2}=-1$ must hold for $\phi$ to be a graded isomorphism.
\end{ex}

Finally, as an interesting application of the previous results, we determine the type of allowed proximity between two irreducible components of the exceptional divisor when both are regularly and projectively contractable. Firstly we need to define and characterize the notion of final component of the exceptional divisor $E$. The naive idea is that, given a sequence of blow-ups $(Z_{s},...,Z_{0},\pi)$, an irreducible component $E_{i}^{s}$ will be final if there is some other sequence of point blow-ups $(Z_{s}^{'},...,Z_{0}^{'},\pi^{'})$ associated to the sequential morphism $(\pi: Z_{s}\rightarrow Z_{0})$ such that $E_{i}^{s}$ will be the exceptional divisor of the last blow-up of $(Z_{s}^{'},...,Z_{0}^{'},\pi^{'})$.

\begin{defn}\label{Def7}
Let $(Z_{s},\ldots,Z_{0},\pi)$ be a sequence of blow-ups over $k$ as in Definition \ref{DefSeqBU}. The components of the exceptional divisor $E$ in $Z_{s}$ are $\left\{E_{1}^{s},...,E_{s}^{s}\right\}$. Assume that $E_{i}^{s}$ is an irreducible component. Set $E_{i}^{i}$ to be the image of $E_{i}^{s}$ in $Z_{i}$. We say that $E_{i}^{s}$ is final with respect to $(Z_{s},\ldots,Z_{0},\pi)$ if there exists an open set $U_{i}$ on $Z_{i}$ such that $E_{i}^{i}\subset U_{i}$, $V_{i}=\pi_{s,i}^{-1}(U_{i})\subset Z_{s}$, and $\pi_{s,i}\vert_{V_{i}}: V_{i}\rightarrow U_{i}$ is an isomorphism (see Remark \ref{NoPi} for $\pi_{s,i}$).
%Let $(Z_{s},...,Z_{0},\pi)$ be a sequence of blow-ups as in Definition \ref{Def1}. The components of the exceptional divisor $E$ in $Z_{s}$ are $\left\{E_{1},...,E_{s}\right\}$. Assume that $H_{i}\in E_{i}$ is an irreducible component. Set $H_{i}^{i}$ to be the image of $H_{i}$ in $Z_{i}$. We say that $H_{i}$ is final with respect to $(Z_{s},...,Z_{0},\pi)$ if there exists an open set $U_{i}$ on $Z_{i}$ such that $H_{i}^{i}\subset U_{i}$, $V_{i}=\pi_{s,i}^{-1}(U_{i})\subset Z_{s}$, and $\pi_{s,i}\vert_{V_{i}}: V_{i}\rightarrow U_{i}$ is an isomorphism (see Remark \ref{NoPi} for $\pi_{s,i}$).
\end{defn}

\begin{defn}\label{Def8}
Let $\pi: Z_{s}\rightarrow Z_{0}$ be a sequential morphism. We say that an irreducible component $E_{i}^{s}$ of $E$ is final if there exists a sequence of blow-ups $(Z_{s},...,Z_{0},\pi)$ associated to $\pi: Z_{s}\rightarrow Z_{0}$ such that $E_{i}^{s}$ is final with respect to this sequence. 
\end{defn}

\begin{rem}
Notice that in the case of sequences of blow-ups at smooth centers, being final is equivalent to being regularly and projectively contractable for an irreducible component of the exceptional divisor.
\end{rem}

Let $(Z_{s},...,Z_{0},\pi)$ and $(Z_{s}^{'},...,Z_{0}^{'},\pi^{'})$ be two sequences of point and rational curve blow-ups associated to the sequential morphism $\pi: Z_{s}\rightarrow Z_{0}$. Then, there exists an isomorphism $b: Z_{s}\rightarrow Z_{s}^{'}$ that induces an isomorphism between the corresponding Chow rings, $\phi: A^{\bullet}(Z_{s})\rightarrow A^{\bullet}(Z_{s}^{'})$. Let $E_{i}^{s}$ and $E_{j}^{s}$ two final divisors, $E_{i}^{s}$ is final with respect to $(Z_{s},...,Z_{0},\pi)$ and $E_{j}^{s}$ is final with respect to $(Z_{s}^{'},...,Z_{0}^{'},\pi^{'})$, such that $E_{i}^{s}\cap E_{j}^{s}\neq\emptyset$. Then, one of the following proximity configurations is verified:
\begin{enumerate}
\item $E_{i}^{s}\xrightarrow{} E_{j}^{s}$ and $E_{j}^{s}\xrightarrow{} E_{i}^{s}$. In this case, $\phi(e_{i}^{s*})=e_{i}^{s'*}-e_{j}^{s*'}$ and $\phi(e_{j}^{s*})=e_{i}^{s'*}$;
\item $E_{i}^{s}\xrightarrow{t} E_{j}^{s}$ and $E_{j}^{s}\xrightarrow{t} E_{i}^{s}$. In this case, $\phi(e_{i}^{s*})=e_{i}^{s'*}$ and $\phi(e_{j}^{s*})=e_{j}^{s*'}$;
\item $E_{i}^{s}\xrightarrow{t} E_{j}^{s}$ and $E_{j}^{s}\xrightarrow{} E_{i}^{s}$. In this case, $\phi(e_{i}^{s*})=e_{i}^{s'*}-e_{j}^{s*'}$ and $\phi(e_{j}^{s*})=e_{j}^{s'*}$.
\end{enumerate}
Now, next corollary follows from Examples \ref{Ex1}, \ref{Ex2} and \ref{Ex3}.

\begin{cor}
Let $(Z_{s},...,Z_{0},\pi)$ be a sequence of point and rational curve blow-ups, and $E_{i}^{s}$, $E_{j}^{s}$ two final divisors with respect to the sequential morphism $\pi: Z_{s}\rightarrow Z_{0}$ with non-empty intersection, $E_{i}\cap E_{j}\neq\emptyset$. Then $E_{i}^{s}\xrightarrow{t} E_{j}^{s}$ and $E_{j}^{s}\xrightarrow{} E_{i}^{s}$, or viceversa.
\end{cor}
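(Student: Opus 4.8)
The plan is to invoke the trichotomy of proximity configurations established in the discussion immediately preceding this corollary, and then to eliminate the two \emph{symmetric} configurations using the explicit Chow-ring computations of Examples \ref{Ex1}, \ref{Ex2} and \ref{Ex3}, leaving only the mixed one.

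First I would fix the two orderings. Since $E_i^s$ and $E_j^s$ are both final with respect to $\pi\colon Z_s\to Z_0$, there is a sequence $(Z_s,\ldots,Z_0,\pi)$ in which $E_i^s$ is the exceptional divisor of the last blow-up and a sequence $(Z_s',\ldots,Z_0',\pi')$ in which $E_j^s$ is last; both realize the same sequential morphism, so they share the sky $Z_s$ and induce a graded ring isomorphism $\phi\colon A^\bullet(Z_s)\to A^\bullet(Z_s')$ between the two systems of total-transform generators. As $E_i^s\cap E_j^s\neq\emptyset$ the two centers meet, so by the discussion above exactly one of the three configurations holds and $\phi$ must take the corresponding prescribed form. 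The key observation is that, because each of $E_i^s,E_j^s$ is the last divisor in its own ordering, $\phi$ is forced to \emph{interchange} the roles of the two components (the top divisor of one ordering is not the top divisor of the other); the whole question is for which configuration such a role-interchanging $\phi$ is an actual graded ring isomorphism.

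Next I would run through the three cases via their local two-blow-up models. The both-proximate configuration $E_i^s\to E_j^s$, $E_j^s\to E_i^s$ is modeled by Example \ref{Ex1} (a rational curve followed by a section of the resulting Hirzebruch surface): there the only graded isomorphism between the two presentations is the order-preserving one, so the role-interchanging map $\phi(e_i^{s*})=e_i^{s'*}-e_j^{s'*}$, $\phi(e_j^{s*})=e_i^{s'*}$ cannot be a graded ring isomorphism, and this configuration is excluded. The both-$t$-proximate configuration $E_i^s\xrightarrow{t}E_j^s$, $E_j^s\xrightarrow{t}E_i^s$ is modeled by Example \ref{Ex2} (a rational curve followed by the strict transform of a second rational curve meeting it); the same computation shows only the order-preserving identification is an isomorphism, while finality of both components demands the interchanging one, so this configuration is excluded as well. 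Finally, the mixed configuration is modeled by Example \ref{Ex3} (a point on a curve then the strict transform of the curve, versus the curve then the fibre over the point): there Case (i) exhibits the required role-interchanging isomorphism $\phi(e_i^{s*})=e_i^{s'*}-e_j^{s'*}$, $\phi(e_j^{s*})=e_j^{s'*}$ as a genuine graded ring isomorphism, while Case (ii) is discarded. Assembling the three cases yields $E_i^s\xrightarrow{t}E_j^s$ and $E_j^s\to E_i^s$, or the same with $i$ and $j$ interchanged.

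The main obstacle I anticipate is the reduction from a general sequence to the two-blow-up models of the Examples: one must argue that the proximity type of the pair $(E_i^s,E_j^s)$ is governed solely by the sub-configuration they form, so that the relations actually tested by $\phi$ are exactly those of Examples \ref{Ex1}--\ref{Ex3} and the remaining exceptional components neither create nor obstruct the role-interchanging isomorphism. Concretely I would restrict to a neighbourhood of $E_i^s\cap E_j^s$, using finality to contract all other components, and check that the tested relations (those of codimension at most two involving $e_i^{s*},e_j^{s*},w_i^{s*},w_j^{s*}$) coincide with the ones computed in the Examples; granting this, the trichotomy closes the argument.
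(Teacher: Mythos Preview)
Your proposal is correct and follows essentially the same approach as the paper: the paper's proof consists only of the sentence that the corollary follows from Examples \ref{Ex1}, \ref{Ex2} and \ref{Ex3}, together with the trichotomy of proximity configurations and the prescribed forms of $\phi$ set up in the preceding discussion, and your case-by-case elimination reproduces exactly that logic. The obstacle you single out---the reduction from a general length-$s$ sequence to the two-blow-up local models---is not addressed in the paper either, so your treatment is if anything more explicit than the original.
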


\begin{rem}
The condition $E_{i}^{s}\xrightarrow{t} E_{j}^{s}$ and $E_{j}^{s}\xrightarrow{} E_{i}^{s}$ is also a sufficient condition. In other words, $E_{i}^{s}$ and $E_{j}^{s}$ are both finals with respect to the sequential morphism $\pi: Z_{s}\rightarrow Z_{0}$, with $E_{i}\cap E_{j}\neq\emptyset$, if and only if $E_{i}^{s}\xrightarrow{t} E_{j}^{s}$ and $E_{j}^{s}\xrightarrow{} E_{i}^{s}$, or viceversa. A proof of this result, based on an intensive use of intersection theory can be found in \cite[Theorem 6.1.5]{Camazon25}.
\end{rem}

\subsection{Disclosure statement}

The author does not work for, consult, own shares in or receive funding from any
company or organization that would benefit from this article, and have disclosed
no relevant affiliations beyond their academic appointment.

% ------------------------------------------------------------------------

\subsection*{Acknowledgment}
The author wishes to express his gratitude to Professors A. Campillo and S. Encinas for their constant advice, suggestions and useful discussions during the preparation of this paper.

%\bibliographystyle{plain}
%\bibliography{biblioUVA}

\end{document}